\numberwithin{equation}{section}
\theoremstyle{plain}
\newtheorem{theorem}{Theorem}[section]
\newtheorem*{namedtheorem}{\theoremname}
\newcommand{\theoremname}{testing}
\newtheorem{claim}[theorem]{Claim}
\theoremstyle{definition}
\newtheorem{definition}[theorem]{Definition}
\newtheorem{remark}[theorem]{Remark}
\definecolor{Red}{rgb}{1,0,0}
\definecolor{Blue}{rgb}{0,0,1}
\definecolor{Olive}{rgb}{0.41,0.55,0.13}
\definecolor{Green}{rgb}{0,1,0}
\definecolor{MGreen}{rgb}{0,0.8,0}
\definecolor{DGreen}{rgb}{0,0.55,0}
\definecolor{Yellow}{rgb}{1,1,0}
\definecolor{Cyan}{rgb}{0,1,1}
\definecolor{Magenta}{rgb}{1,0,1}
\definecolor{Orange}{rgb}{1,.5,0}
\definecolor{Violet}{rgb}{.5,0,.5}
\definecolor{Purple}{rgb}{.75,0,.25}
\definecolor{Brown}{rgb}{.75,.5,.25}
\definecolor{Grey}{rgb}{.5,.5,.5}
\definecolor{Black}{rgb}{0,0,0}
\newcommand{\bfpi}{\boldsymbol{\pi}}
\newcommand{\eps}{\varepsilon}
\newcommand{\ind}{\mathbf{1}}
\newcommand{\E}{\mathbb{E}}
\renewcommand{\P}{\mathbb{P}}
\newcommand{\var}{\mathrm{Var}}
\newcommand{\cov}{\mathrm{Cov}}
\newcommand{\gr}{G}
\newcommand{\ve}{V}
\newcommand{\ed}{E}
\newcommand{\tree}{\mathbbm{T}}
\newcommand{\nint}{n_{\mathrm{in}}}
\newcommand{\dmax}{d_{\mathrm{max}}}
\newcommand{\nve}{N}
\newcommand{\block}[1]{z(#1)}
\newcommand{\bsize}[1]{\nve_{#1}}
\newcommand{\aff}[1]{B_{#1}}
\newcommand{\btr}[2]{p_{#2}(#1)}
\newcommand{\pbtr}[1]{p_{#1}}
\newcommand{\epbtr}[1]{\hat{p}_{#1}}
\newcommand{\pbtrmat}{P}
\newcommand{\func}{y}
\newcommand{\bfunc}{c_y}
\newcommand{\mfunc}{\mu}
\newcommand{\wfunc}[1]{\hat{\mfunc}_{{#1},\mathrm{w}}}
\newcommand{\emfunc}[1]{\hat{\mfunc}_{{#1}\mathrm{VH}}}
\newcommand{\ps}{\hat{\mfunc}_{\mathrm{PS}}}
\newcommand{\ipw}{\hat{\mfunc}_{\mathrm{IPW}}}
\newcommand{\vh}{\hat{\mfunc}_{\mathrm{VH}}}
\newcommand{\mvh}{\hat{\mfunc}_{\mathrm{mVH}}}
\renewcommand{\deg}[1]{d_{#1}}
\newcommand{\edeg}[1]{\hat{H}_{#1}}
\newcommand{\mdegb}[1]{\delta^B_{#1}}
\newcommand{\degtwow}{d_\theta^{(2)}}
\newcommand{\degtwo}{d^{(2)}}
\newcommand{\neighb}{\mathcal{N}}
\newcommand{\pstat}[1]{\pi_{#1}^{B}}
\newcommand{\estat}[1]{\hat{\pi}_{#1}^{B}}
\newcommand{\pstatvec}{\bfpi^{B}}
\newcommand{\estatvec}{\hat{\bfpi}^{B}}
\newcommand{\filter}{\mathcal{F}}
\newcommand{\filtertwo}{\mathcal{G}}
\newcommand{\childtwo}{\mathcal{C}^{(2)}}
\newcommand{\etrans}{n}
\newcommand{\eventdeg}{\mathcal{E}_{\mathrm{D}}}
\newcommand{\eventdegtwo}{\mathcal{E}_{\mathrm{D},2}}
\newcommand{\neven}{n^{(\mathrm{e})}}
\newcommand{\etranseven}{\etrans^{(\mathrm{e})}}
\newcommand{\azuid}[1]{I_{#1}}
\newcommand{\eazu}[1]{W_{#1}}
\numberwithin{equation}{section}
\theoremstyle{plain}
\begin{document}

\begin{frontmatter}
	\title{Reducing Seed Bias in Respondent-Driven Sampling\\by Estimating Block Transition Probabilities}
	\runtitle{Post-Stratified RDS}
	
	\begin{aug}
		\author{\fnms{Yilin} \snm{Zhang}\thanksref{t1}\ead[label=e1]{yilin.zhang@wisc.edu}},
		\author{\fnms{Karl} \snm{Rohe}\thanksref{t1}\ead[label=e2]{karlrohe@stat.wisc.edu}},
		\and
		\author{\fnms{Sebastien} \snm{Roch}\thanksref{t2}\ead[label=e3]{roch@math.wisc.edu}}

		\thankstext{t1}{These authors gratefully acknowledge support from NSF grant DMS-1612456 and ARO grant W911NF-15-1-0423.}
		\thankstext{t2}{This author gratefully acknowledges support from NSF grants DMS-1149312 (CAREER), DMS-1614242 and CCF-1740707 (TRIPODS), and a Simons Fellowship.}
		\runauthor{Zhang et al.}
		\affiliation{University of Wisconsin-Madison,\\ Department of Statistics and Department of Mathematics}
		
		\address{
			Yilin Zhang, Karl Rohe\\
			Department of Statistics\\
			University of Wisconsin Madison\\
			1300 University Ave\\
			Madison, WI 53706\\
			USA\\
			\printead{e1}\\
			\phantom{E-mail:\ }\printead*{e2}}
		
		\address{
			Sebastien Roch\\
		    Department of Mathematics\\
		    University of Wisconsin-Madison\\
		    480 Lincoln Drive\\
		    Madison, WI 53706\\
		    USA\\
			\printead{e3}}
	\end{aug}
\begin{abstract}
Respondent-driven sampling (RDS) is a popular approach to study marg\-inalized or hard-to-reach populations.  It collects samples from a networked population by incentivizing participants to refer their friends
into the study.  One major challenge in analyzing RDS samples is seed bias.  Seed bias refers to the fact that when the social network is divided into multiple communities (or blocks), the RDS sample might not provide a balanced representation of the different communities in the population, and such unbalance is correlated with the initial participant (or the seed).  In this case, the distributions of estimators are typically non-trivial mixtures, which are determined (1) by the seed and (2) by how the referrals transition from one block to another.  This paper shows that 
(1) block-transition probabilities are easy to estimate with high accuracy, and 
(2) we can use these estimated block-transition probabilities to estimate the stationary distribution over blocks and thus, an estimate of the block proportions.  This stationary distribution on blocks has previously been used in the RDS literature to evaluate whether the sampling process has appeared to ``mix''.  
We use these estimated block proportions in a simple post-stratified (PS) estimator that greatly diminishes seed bias.
By aggregating over the blocks/strata in this way, we prove that the PS estimator is $\sqrt{n}$-consistent under a Markov model, even when other estimators are not.  Simulations show that the PS estimator has smaller Root Mean Square Error (RMSE) compared to the state-of-the-art estimators.

\end{abstract}


\begin{keyword}
	\kwd{respondent-driven sampling}
	\kwd{post-stratification}
	\kwd{social network}
	\kwd{Stochastic Blockmodel}
	\kwd{Markov process}
\end{keyword}

\end{frontmatter}

\section{Introduction}

Respondent-driven sampling (RDS) is one of the most popular network-based approaches to sample marginalized and hard-to-reach populations, such as drug users, sex workers, and the homeless~\cite{heckathorn1997respondent}.  RDS has been widely used, for instance, to quantify HIV prevalence in at-risk populations \cite{malekinejad2008using, johnston2013introduction}.   According to a recent literature review \cite{white2015strengthening}, RDS has been used in over 460 studies from 69 countries.

\begin{figure}\centering 
	\includegraphics[width=1\columnwidth]{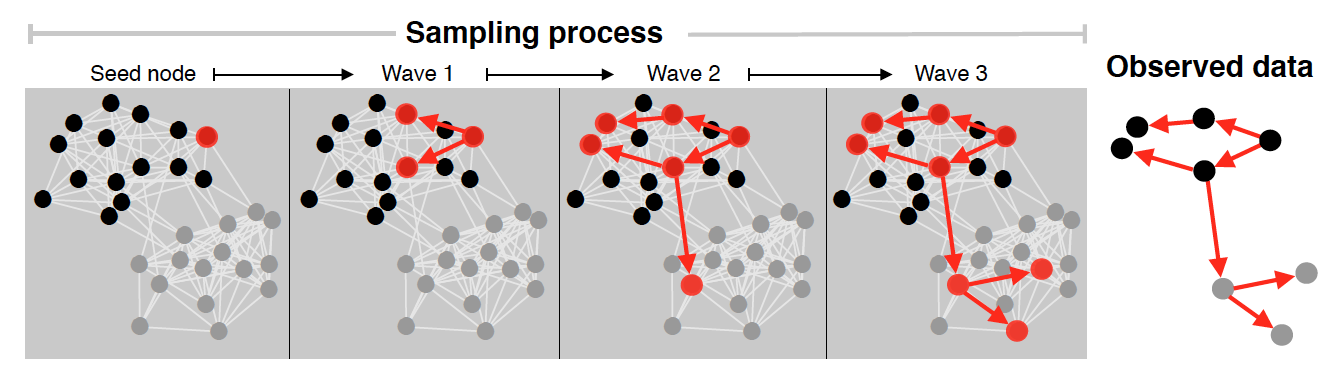}
	\caption{This Figure from \cite{rohe2015network} illustrates the RDS sampling process.  }
	\label{fig: rds_process}
\end{figure}

RDS collects samples through peer referral on a social network.  It starts from some initial participant as the seed, which forms wave zero.  In the process, we incentivize each participant to pass some (usually three to five) referral coupons to their friends. Those who return to the study site with a referral coupon form the next wave of samples.  We repeat this process until we get enough samples or the participants stop referring.  Figure \ref{fig: rds_process} from \cite{rohe2015network} gives an illustration for the RDS sampling process.  There are three components in RDS sampling: (1) the social network, (2) the sampling tree, and (3) the variable of interest (denoted by color in Figure \ref{fig: rds_process}).  The underlying social network is the target population to study, which is unobserved.  For each sampled node, we observe their HIV status (black or grey in Figure \ref{fig: rds_process}),  and which node refers them to the sample.  We aim to estimate the proportion of people with certain trait, such as HIV positive (nodes that are grey in Figure \ref{fig: rds_process}), in the population.  

The link-tracing sampling procedure of RDS enables us to reach the hard-to-reach populations.  However, 
RDS samples are dependent.  This dependence is particularly bad when there are multiple communities in the target population and the people form most of their friendships within their own communities (i.e. blocks).  For example, people from the east side of the town might only know a few people from the west side of the town, and thus they are much more likely to refer people from the west side of the town.  
This is referred to as a ``bottleneck'' and it leads to a sample that is unbalanced between the different communities. 
If the HIV prevalence is higher on one side of the town, then this bottleneck creates dependence between observations in an RDS sample.  
If the initial participant is from the east side, then the sample may underrepresent people from the west side.  This creates ``seed bias.''  
In statistical models which presume that the seed node is randomized, this ``seed bias'' appears as additional variance in the final estimator.  When some participants refer too many contacts, the variance of the traditional RDS estimator, Volz-Heckathorn (VH) estimator \cite{heckathorn1997respondent}, decays at a rate slower than $O(n^{-1})$ \cite{rohe2015network}.  We provide an example in Appendix \ref{app:negative}.   To address this issue, recent work \cite{roch2017generalized} has derived an idealized generalized least squares (GLS) estimator for which the standard error decays at rate $O(n^{-1/2})$ with growing sample size $n$ under a fixed social network.  The practical implementation of the estimator, called the feasible GLS (fGLS) estimator, requires solving an $n \times n$ system of equations and comes with no theoretical guarantees.

\begin{figure}[t] 
   \centering
   \includegraphics[width=5in]{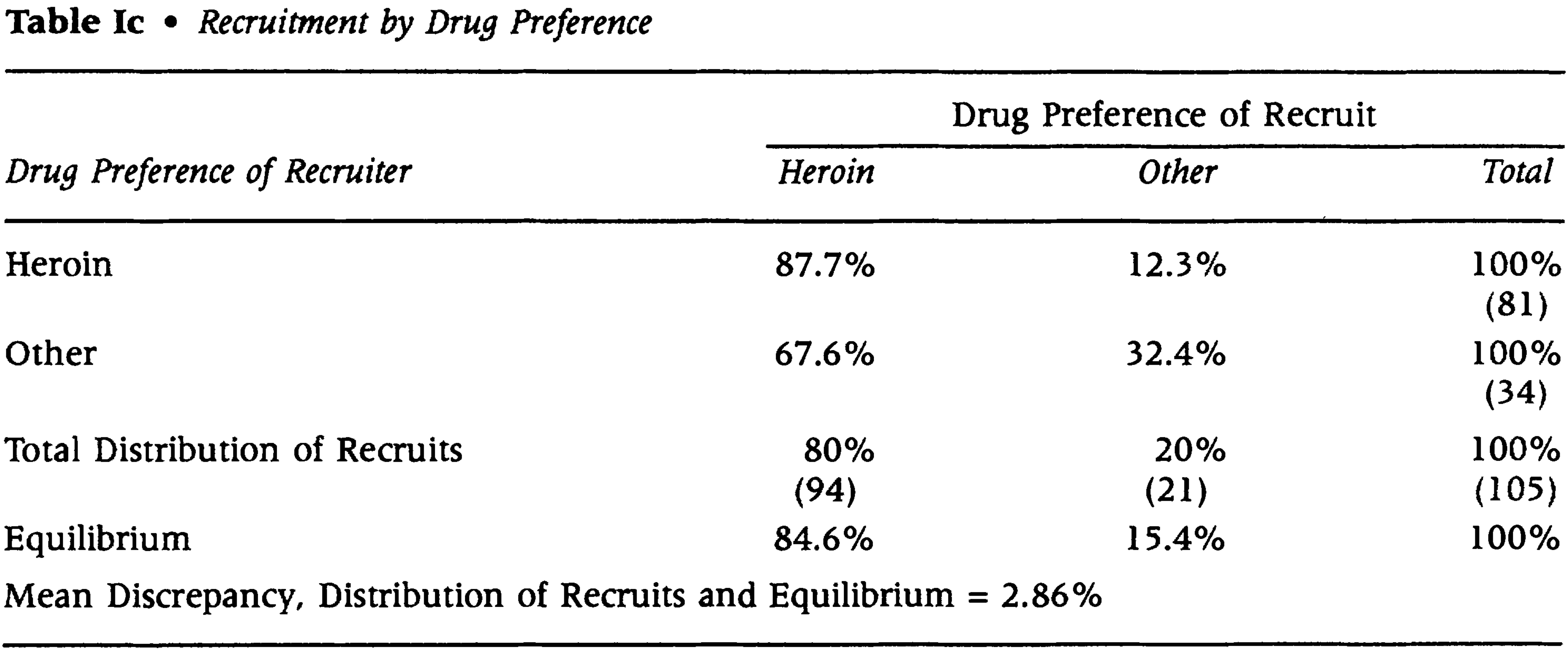} 
   \caption{
Heckathorn (1997) first proposed RDS and illustrated the RDS technique with a sample of drug users.  This is Table 1c from that paper.  It summarizes the sample by computing the empirical transition matrix between two strata of drug users; those who prefer heroin and those who prefer some other drug.  Other empirical transition matrices in that paper stratify based upon ethnicity, gender, and location of recruitment.}
   \label{fig:heck}
\end{figure}

This paper provides an estimator that is easy to compute and has root mean squared error that decays at rate $\Theta(n^{-1/2})$ up to log factors, by implicitly adjusting for bottlenecks between different communities.  While this estimator is new, its essential components are well known and reported in the RDS literature.
This new estimator assumes that we have collected the ``bottlenecked'' community memberships of the sampled individuals.  
With this data, a key summary is the empirical transition matrix between communities, in which element $u,v$ is the proportion of referrals from participants in community $u$ to participants in community $v$. 
In the RDS literature, this matrix is a common way to summarize the sampling procedure and understand the underlying social network.  
For example, the original RDS paper \cite{heckathorn1997respondent} reports on a sample of drug users.  Table 1c from that paper (reprinted as Figure \ref{fig:heck} herein) gives the empirical transition matrix between communities defined by drug preference.  
This empirical transition matrix is also a key piece of the feasible GLS estimator \cite{roch2017generalized}.

Interestingly, an estimate of the proportion of nodes in each community can be derived from the empirical transition matrix.
Notice in Figure \ref{fig:heck} that \cite{heckathorn1997respondent} reports the equilibrium distribution on the different strata/communities. This takes the empirical transition matrix  as a Markov transition matrix on the different communities and computes the stationary (i.e. equilibrium) distribution of this Markov process (i.e. the leading left eigenvector of the transition matrix).  In Figure \ref{fig:heck}, the equilibrium distribution is close to the total distribution of recruits.  When there is a bottleneck, this paper shows that the equilibrium distribution is a better estimator than the total distribution of recruits. The basic reason is that \textit{even when there is a bottleneck}, each row of the empirical transition matrix is composed of $O(n)$ nearly independent multinomial samples.  There is one caveat; our estimator does not use the actual equilibrium distribution of the empirical transition matrix (i.e. the quantity reported in Figure \ref{fig:heck}).  Instead, we have a simple approximation of the equilibrium which is easier to compute and thus simplifies the proof.  

The final estimator is a post-stratified estimator where the strata are the community memberships and the estimated proportion of nodes in each strata is derived from the estimated equilibrium distribution. We call this the PS estimator.
%
%
%
%
%
%
The PS estimator has three major advantages: (1) computational efficiency, (2) smaller variation (bias square, variance and RMSE), and (3) block-wise byproducts.  We show in Theorem \ref{thm: consistency} that our PS estimator has both its bias and standard deviation decay at rate $\Theta(n^{-1/2})$ up to log factors, 
which does not hold for the popular Volz-Heckathorn (VH) esimtator \citep{heckathorn1997respondent} and does not show the GLS estimator \citep{roch2017generalized}.  The simulation studies also show our PS estimator has smaller variation (bias square, variance and RMSE) compared to the VH estimator and fGLS estimator.  The improvement is significant especially when there exists bottleneck in social networks.


The paper is organized as follows.  Section \ref{sec: preliminary} defines the Markov model, the quantity to estimate, and the traditional RDS estimators.  Section \ref{sec: stratified_vh} introduces the PS estimator.  Section \ref{sec: theory} shows PS estimator is $\sqrt{n}$-consistent under the Degree Corrected Stochastic Blockmodel (DC-SBM).  In Section \ref{sec: simulation}, we show by simulations that PS estimator has smaller variation than the state-of-the-art estimators, especially when there exists bottleneck in social networks.  We summarize with a discussion in Section \ref{sec: discuss}.
\section{Preliminaries}\label{sec: preliminary}

We model referrals using a Markov process similar to the ones
previously considered in the RDS literature \cite{goel2009respondent, heckathorn1997respondent, salganik2004sampling, volz2008probability,rohe2015network,roch2017generalized}. 

\subsection{Markov process on a social network}\label{sec: markov}

A social network $G$ consists of a node set 
$V = \{1,\dots, N\}$ of individuals
and an undirected edge set 
$$E = \{\{i,j\}: \text{$i$ and $j$ can refer one another}\}.$$  
We use $i\in V$ and $i\in G$ interchangeably.  We assume that $G$ is connected.
Let $w_{ij} = w_{ji} > 0$ be the weight of edge $\{i,j\}\in E$, which models recruitment preference (more details in Section \ref{sec: theory}).  For any $\{i,j\}\not\in E$, we let $w_{ij} = w_{ji} = 0$ by convention.  
If the graph is unweighted, then $w_{ij} = 1$ for all $\{i,j\}\in E$. 
For each node $i \in V$, we denote its neighbor in the network $G$ by
$
\neighb(i)
=
\left\{
j \in V\,:\,\{i,j\} \in E
\right\}.$  
We denote the degree of node $i$ as $\deg{i} = \sum_j w_{ij}$ and the mean degree of graph $G$ as $\bar{d}= \sum_i \deg{i}/N$.

We model the collection of samples in RDS with a Markov process on the social network $G$ indexed by a tree.  It starts with an initial participant as seed, which we index as vertex 0, and develops into a rooted tree, $\tree$ (a connected graph with $n$ nodes, no cycles, and a vertex $0$). We use $\tau\in\tree$ to denote that node $\tau$ belongs to the samples indexed by $\tree$.  
For each node $\tau\in\tree$, we denote the parent of $\tau$ as $\tau'$ (the node that refers $\tau$ to the sample).
Formally, an RDS sample is an indexed collection of random nodes $(X_{\tau}\in G: \tau\in\tree)$, where each referral $X_{\tau'}\rightarrow X_{\tau}$ 
has probability 
$$\mathbbm{P}(X_{\tau} = j|X_{\tau'} = i) = P_{ij}, \quad \forall i,j\in G,$$
where the transition matrix $P\in\mathbbm{R}^{N\times N}$ has elements $$P_{ij} = \frac{w_{ij}}{\deg{i}}.$$
Since the graph $G$ is undirected and connected, $P$ is a reversible Markov transition matrix with unique stationary distribution $\bfpi = (\pi_i)_{i \in G} \in \mathbbm{R}^N$ with $$\pi_i = \frac{\deg{i}}{N\bar{d}}.$$  
While the referrals are random, we think of $\tree$ itself as deterministic.

Following \cite{benjamini1994markov}, we refer to this Markov process as a $(\tree, P)$-walk on $G$.  Note that $G$ and $\tree$ are two distinct graphs: the node set in $G$ indexes the population, which is a social network, and the node set in $\mathbb{T}$ indexes the samples, which is a sampling tree.  
We say that the $(\tree, P)$-walk is stationary if the seed
is chosen according to the stationary distribution.

\subsection{Quantity to estimate and the Volz-Heckathorn estimator}
For each node $i\in G$, we denote the variable of interest (e.g., the indicator of HIV status) as $y(i)$.  We wish to estimate the population mean of the variable of interest
$$\mu_{\text{true}} = \frac{1}{N}\sum\limits_{i\in G} y(i).$$
For each sample $X_{\tau}$, we observe 
$$Y_{\tau} = y(X_{\tau}), \quad\forall\tau\in \tree.$$  
The sample average 
$$\hat{\mu} = \frac{1}{n}\sum\limits_{\tau\in \tree}Y_{\tau}$$ 
is generally biased, since nodes with larger degrees are more likely to be sampled in the Markov process. 
Specifically, under the \emph{stationary} $(\tree,P)$-walk on $G$, it has expectation 
$$\mathbbm{E}[\hat{\mu}] = \mu = \mathbbm{E}[Y_0] = \sum\limits_{i \in G} y(i)\pi_i.$$
In general, $\mu\not=\mu_{\text{true}}$.  

To obtain an unbiased estimator of $\mu_{\text{true}}$, the sample average must be adjusted.  Using $\pi_i = \deg{i}/(N\bar{d})$, the inverse probability weighted estimator (IPW), 
$$\ipw = \frac{1}{n}\sum\limits_{\tau\in \tree} \frac{Y_{\tau}}{\pi_{X_{\tau}}N} = \frac{\bar{d}}{n}\sum\limits_{\tau\in\tree}\frac{Y_{\tau}}{\deg{X_{\tau}}},$$
is an unbiased estimator of $\mu_{\text{true}}$ \citep{horvitz1952generalization}.  Additionally estimating 
$\bar{d}$ with the harmonic mean of the observed node degrees,
$$\hat{H} = \left(\frac{1}{n}\sum\limits_{\tau\in\tree}
\frac{1}{\deg{X_{\tau}}}\right)^{-1},$$
leads to the popular Volz-Heckathorn (VH) estimator \citep{volz2008probability},
$$\vh = \frac{\hat{H}}{n}\sum\limits_{\tau\in\tree}\frac{Y_{\tau}}{\deg{X_{\tau}}}.$$

The VH estimator has been extensively used in the study of marginalized populations \cite{malekinejad2008using, johnston2013introduction, white2015strengthening}, but it is highly variable.  The variance of the VH estimator in general may decay at a rate slower than $O(n^{-1})$ \cite{rohe2015network}, implying that many more samples are required to reduce the standard error. See Section~\ref{app:negative}. We address this issue by introducing a post-stratification approach to RDS in the following section. 

\section{A new estimator}
\label{sec: stratified_vh}

\subsection{A post-stratification approach to RDS} \label{sec: stratification}


\paragraph{Stratification}
Stratification has been extensively used in traditional random sampling to reduce variance.  The key idea of stratified sampling is as follows. Assume that the overall population can be divided into (ideally homogeneous) sub-groups (which we refer to as blocks) based on some variable, such as gender, race, etc.  Then the sample mean and sample variance of the total population can be calculated using block-wise sample means and variances.

Specifically, suppose there are $K$ blocks in a population with $N$ individuals.  For each block $k$, we denote the block size as $N_k$, the block-wise population mean as $\mu_k$, the sample size as $n_k$
and the block-wise sample average as $\hat{\mu}_k$.  The sample average $\hat{\mu}$ and sample variance $s^2$ for the total population can be derived from the block-wise quantities by
\begin{equation}\label{eq:stratified-formulas}
\hat{\mu} = \sum\limits_{k = 1}^K \left(\frac{N_k}{N}\right) \hat{\mu}_k,
\quad\text{ and }\quad s^2 = \sum\limits_{k = 1}^K \left(\frac{N_k}{N}\right)^2\frac{N_k-n_k}{N_k}\frac{s_k^2}{n_k}.
\end{equation}

Stratified sampling by proportionate allocation randomly selects individuals proportionally to the sizes of the different blocks, with the
goal of improving accuracy by reducing sampling error. Post-stratified sampling, on the other hand, performs stratification \emph{after sampling} and calculates $\hat{\mu}$ and $s^2$ as above.  Post-stratification is useful when the samples constitute an unbalanced representation of the full population.

\paragraph{Block proportions are unobserved in marginalized populations}
We seek to apply this last approach
to RDS in order to deal with seed bias. An important issue arises
however. Per~\eqref{eq:stratified-formulas},
traditional post-stratification requires the knowledge of the block proportions $N_k/N$.  These are typically unknown in marginalized populations.  Hence, we need to estimate the block proportions from the samples.  In the next section, we describe how we do this and 
we formally define a novel post-stratified estimator for RDS.

\subsection{Block-wise quantities}

For a set $V'$, denote its cardinality by $|V'|$.   
Suppose there are $K$ blocks 
in the social network $G$. For each node $i\in G$, denote its block membership as $z(i)$, i.e., $z(i) = k$ if $i$ belongs to block $k\in\{1,\dots, K\}$.  To simplify notation, we write $i\in V_k$ to mean $z(i) = k$.  For each block $k$, we denote the block size as $N_k = |V_k|$ 
and the block-wise mean as $\mu_k = N_k^{-1}\sum\limits_{i\in V_k}y(i)$.  

For each sample $\tau\in\tree$, we let its block membership be $Z_{\tau} = z(X_{\tau})$ and we write $\tau\in \tree_k$ to mean $Z_{\tau} = k$.  We define for each block $k$ the sample size as $n_k$,  the block-wise harmonic average degree as 
\begin{equation}\label{def: H_delta_k}
\edeg{k} = \left(\frac{1}{n_k}\sum\limits_{\tau\in\tree_k}\frac{1}{\deg{X_{\tau}}}\right)^{-1}, 
\end{equation}
and the block-wise sample average weighted by degree, i.e., the VH estimator for $\mu_k$, as
\begin{equation}\label{def: hat_mu_k}
\emfunc{k} = \frac{\edeg{k}}{n_k}\sum\limits_{\tau\in\tree_k}\frac{Y_{\tau}}{\deg{X_{\tau}}}.
\end{equation}

Suppose that we observe the block membership of each sample, i.e., we observe $Z_{\tau} = z(X_{\tau})$ for all $\tau\in\tree$.  We define the matrix $\hat{Q}\in\mathbbm{R}^{K\times K}$ such that, for any two blocks $u,v\in\{1,\dots,K\}$,
\begin{align*}
\hat{Q}_{uv} = \frac{1}{n}\times\text{number of referrals from block $u$ to block $v$},
\end{align*}
and the row-normalized matrix $\hat{P}^B\in\mathbbm{R}^{K\times K}$ whose $(u,v)$-entry is
\begin{equation}\label{def: hat_QR}
\epbtr{uv} = \frac{\hat{Q}_{uv}}{\hat{Q}_{u\ast}} .
\end{equation}
Here, for a matrix $A$, we let $A_{u\ast} = \sum\limits_{v} A_{uv}$
and $\mathbf{1}\{\mathcal{E}\}$ is the indicator of event $\mathcal{E}$.  
Finally we define the vector $\estatvec = (\estat{k})_k$ with entries
\begin{equation}\label{def: estat}
\estat{k}
=  \left[\sum\limits_v\frac{\epbtr{kv}}{\epbtr{vk}}\right]^{-1}.
\end{equation}

\subsection{The post-stratified estimator}

We define our new estimator next.
\begin{definition}[The post-stratified estimator]\label{def: s-vh}  For an RDS sample on a graph $G$ with $K$ blocks, the post-stratified (PS) estimator is
\begin{equation}\label{eq: s-vh}
\ps = 
\sum_k \hat{\alpha}_k\,
	\emfunc{k},
\end{equation}
with
$$
\hat{\alpha}_k = \frac{\estat{k}/\edeg{k}}{\sum_\ell \estat{\ell}/\edeg{\ell}},
$$
where $\edeg{k}$, $\emfunc{k}$, and $\estat{k}$  are defined in \eqref{def: H_delta_k}, \eqref{def: hat_mu_k} and \eqref{def: hat_QR}
respectively.
\end{definition}
Comparing~\eqref{eq: s-vh} with~\eqref{eq:stratified-formulas}, the estimator $\ps$ can indeed be seen as a post-stratified estimator. 
In Section \ref{sec: motivation}, we argue that $\hat{\alpha}_k$ is an estimator of the block proportion of block $k$.  
Note that we also use the VH estimator $\emfunc{k}$ on each block $k$, instead of the block-wise sample average, to adjust for the bias induced by node degrees.  



\subsection{Motivation for the PS estimator}\label{sec: motivation}


To motivate our new estimator, we analyze its behavior under
a standard model of random social network with community structure, the degree-corrected stochastic blockmodel (DC-SBM) \cite{karrer2011stochastic}.
\begin{definition}[Degree-corrected stochastic blockmodel] 
	Let $B\in \mathbbm{R}_{+}^{K\times K}$ be a positive, symmetric matrix and let $\theta \in \mathbbm{R}_+^N$ be a positive vector. Under the DC-SBM, a social network $G=(V,E)$ with $V = \{1,\ldots,N\}$ is drawn randomly as follows. Assume that we have a partition $V_1,\ldots,V_K$ of $V$ into $K$ blocks labeled $\{1,\ldots,K\}$. Let $N_1, \ldots, N_K$ be the respective sizes of the blocks. For a node $i \in V$, let $Z_i$ be its block. 
	Each possible edge $\{i,j\}$ is present independently from all other edges with probability 
	\begin{equation}\label{def: dc-sbm}
	\P[\{i,j\} \in E] = \theta_i \theta_j B_{Z_i, Z_j}.
	\end{equation}
	By convention, we assume $\sum_{i\in V_k}\theta_i = 1$ 
	for all block $k$.  
\end{definition}
\begin{remark}[Self-loops]
	To simplify the notation throughout, we allow self-loops
	$\{i,i\}$ in the DC-SBM, each of which will contribute $1$ to degree counts (instead of the standard convention of $2$). 
	Note that, in a dense
	graph, such self-loops will play a negligible role. 
\end{remark}

To justify our PS estimator under the DC-SBM, 
we make three observations:
\begin{enumerate}
	\item 
	Define the matrices $Q = B/m$, where $m = \ind^T B \ind$,
	and $\pbtrmat^{B}= (\pbtr{uv})_{u,v}$, where
	\begin{equation}
	\label{eq:pbtr-def}
	\pbtr{uv} 
	= \frac{\aff{uv}}{\aff{u\ast}}
	= 
	\frac{Q_{uv}}{Q_{u\ast}},
	\end{equation}
	for any two blocks $u,v$.
	Since $P^B$ is positive and row-normalized version of the symmetric matrix $Q$, it has a unique stationary distribution $\pstatvec = (\pstat{k})_k$, where 
	$$
	\pstat{k} 
	= Q_{k\ast}
	= \left[\sum\limits_v\frac{Q_{v\ast}}{Q_{k\ast}}\right]^{-1}
	= \left[\sum\limits_v\frac{Q_{kv}/Q_{k\ast}}{Q_{vk}/Q_{v\ast}}\right]^{-1}
	= \left[\sum\limits_v\frac{\pbtr{kv}}{\pbtr{vk}}\right]^{-1}.
	$$
	Indeed
	$$
	\sum\limits_k Q_{k\ast}\pbtr{kv} 
	= \sum\limits_k Q_{k\ast}\frac{Q_{kv}}{Q_{k\ast}} 
	= \sum\limits_k Q_{kv} = Q_{v\ast}.
	$$

	\item The expected degree of node $i$ in block $k$
	is
	$$
	\E[\deg{i}] 
	= \sum_{j} \theta_i \theta_j B_{Z_i, Z_j}
	= \theta_i \sum_\ell \sum_{j \in V_\ell} \theta_j B_{k\ell}
	= \theta_i B_{k\ast}.
	$$
	Hence the block-wise mean expected degree over block $k$
	is
	$$
	\mdegb{k} 
	= \frac{1}{N_k} \sum_{i \in V_k} \theta_i B_{k\ast}
	= \frac{B_{k\ast}}{N_k}.
	$$
	
	\item Combining the two observations above, we get
	$$
	\frac{\pstat{k}}{\mdegb{k}}
	= \frac{N_k}{\sum_k B_{k\ast}}.
	$$
	Because the denominator is constant, we have finally
	$$
	\alpha_k 
	= \frac{\pstat{k}/\mdegb{k}}{\sum_\ell \pstat{\ell}/\mdegb{\ell}}
	= \frac{N_k}{N}.
	$$
	
\end{enumerate}
Therefore, by~\eqref{eq:stratified-formulas}, 
the population mean $\mu_{\text{true}}$ can be re-written as 
$$
\mu_{\text{true}} 
= \sum_k \alpha_k\, \mu_k.
$$
From this it follows that, to estimate $\mu_{\text{true}}$, it suffices to estimate the block-wise mean $
\mu_k$, the block-wise expected mean degree $\mdegb{k}$, and the stationary distribution $\pstat{k}$ of $\pbtrmat^{B}$, for each block $k$.  We estimate them with  $\emfunc{k}$, $\edeg{k}$, and $\estat{k}$, respectively---leading to the PS estimator in~\eqref{eq: s-vh}.  In the proof of Theorem \ref{thm: consistency} below, we analyze the accuracy of these estimators (see Claims~\ref{claim:estat-conc}, \ref{claim:degrees-conc} and \ref{claim:emfunc-conc}). 



\section{Main theoretical result}\label{sec: theory}

In this section, we show that the PS estimator defined 
in \eqref{eq: s-vh} has error $O(\sqrt{\log n/n})$ with high probability
when the social network is distributed
under a dense DC-SBM. 

\begin{theorem}[Main result]   
\label{thm: consistency}
Suppose the social network $G = (V,E)$ of size $N$ is distributed
according to the DC-SBM with $K$ blocks of respective sizes 
$N_1,\ldots,N_k$ and parameters $B\in \mathbbm{R}_{+}^{K\times K}$ 
and $\theta \in \mathbbm{R}_+^N$. Suppose $\tree$ is a sampling tree of 
size $n \leq N$. Let $y \in  \mathbbm{R}_+^N$ be the variable of interest. 
Assume that there are universal constants $0 < c_- < c_+ < +\infty$ and 
$0 < c_y, c_d < +\infty$ independent of $N$ and $n$ such that
the following assumptions hold:
	\begin{enumerate}[label=(\alph*)]
	\item\label{ass: balanced} [Linear-sized blocks] $c_- N \leq N_k \leq c_+ N$ for all k;
	\item \label{ass: dense} [Dense graph] $c_- N^2 \leq B_{uv} \leq c_+ N^2$ for all blocks $u,v$;
	\item\label{ass: dense2} [Degree homogeneity] $c_- N^{-1} \leq \theta_i \leq c_+ N^{-1}$ for all nodes $i\in G$;
	\item\label{ass: bounded_y} [Bounded variables] $0 \leq y(i) \leq c_y$ for all nodes $i \in G$;
	\item\label{ass: degree} [Limited referrals] The maximum degree of $\tree$ is less than or equal to $c_d$.
\end{enumerate}
 Then, for any $\eps, \eps' >0$, there exists a constant $c>0$ (not depending on $n, N$) such that, with probability $1-\eps$ over the choice of $G$, the following holds.  
 For any $(\tree,P)$-walk on $G$ 
 the PS estimator defined in \eqref{eq: s-vh} 
 satisfies
$$|\ps-\mu_{\mathrm{true}}|\leq c\sqrt{\frac{\log n}{n}},$$
with probability $1-\eps'$.
\end{theorem}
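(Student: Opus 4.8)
The plan is to prove the three approximations foreshadowed in Section~\ref{sec: motivation} --- that $\estat{k}=\pstat{k}\,(1+O(\sqrt{\log n/n}))$ (Claim~\ref{claim:estat-conc}), $\edeg{k}=\mdegb{k}\,(1+O(\sqrt{\log n/n}))$ (Claim~\ref{claim:degrees-conc}), and $\emfunc{k}=\mu_k+O(\sqrt{\log n/n})$ (Claim~\ref{claim:emfunc-conc}) --- and then to assemble them. Since $\pstat{k}/\mdegb{k}=N_k/m$ with $m=\ind^TB\ind$ (the computation in Section~\ref{sec: motivation}), the first two give $\estat{k}/\edeg{k}=(N_k/m)(1+O(\sqrt{\log n/n}))$ and hence $\hat\alpha_k=\alpha_k(1+O(\sqrt{\log n/n}))$ with $\alpha_k=N_k/N$; under assumptions~\ref{ass: balanced}--\ref{ass: dense2} each of $\alpha_k,\mu_k,\pstat{k},\mdegb{k}$ and the relevant block sums is bounded away from $0$ and $\infty$ by a universal constant, so these relative errors compose benignly and $\ps=\sum_k\hat\alpha_k\emfunc{k}=\sum_k\alpha_k\mu_k+O(\sqrt{\log n/n})=\mu_{\mathrm{true}}+O(\sqrt{\log n/n})$, using $\mu_{\mathrm{true}}=\sum_k\alpha_k\mu_k$.

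First I would isolate a ``good graph'' event $\mathcal{E}_0$, of probability at least $1-\eps$ over the draw of $G$, on which all graph-level quantities concentrate at the relative scale $O(\sqrt{\log N/N})$: the degrees $\deg{i}=\theta_iB_{Z_i\ast}(1+O(\sqrt{\log N/N}))$, the block-degrees $|\neighb(i)\cap V_v|=\theta_iB_{Z_iv}(1+O(\sqrt{\log N/N}))$, the block sums $\sum_{i\in V_k}\deg{i}=B_{k\ast}(1+O(\sqrt{\log N/N}))$, and --- the quantity that drives the within-block analysis --- $g_k^f(j):=\sum_{i\in V_k}\mathbf 1\{\{i,j\}\in E\}f(i)/\deg{i}$ for $f\in\{1,y\}$, for which $g_k^f(j)=(\theta_jB_{Z_jk}/B_{k\ast})\sum_{i\in V_k}f(i)\,(1+O(\sqrt{\log N/N}))$ for every node $j$. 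These follow from Chernoff/Bernstein bounds, with $g_k^f(j)$ handled by a bounded-differences inequality over the edge indicators (each potential edge changes $g_k^f(j)$ by $O(1/N)$ and only $O(N)$ of them touch $j$, and one must also account for the weak dependence between $\mathbf 1\{\{i,j\}\in E\}$ and $\deg{i}$), plus a union bound over the $O(KN)$ quantities; the constant in the probability $1-\eps$ absorbs the union bound, and $N\ge n$ turns $O(\sqrt{\log N/N})$ into $O(\sqrt{\log n/n})$.

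Working on $\mathcal{E}_0$ and conditioning on $G$, I would control the walk by processing $\tree$ in breadth-first order: since the parent $X_{\tau'}$ of each $\tau$ is revealed before $\tau$, the one-step conditional expectations $\E[h(X_\tau)\mid\filter_{\tau'}]=\sum_iP_{X_{\tau'},i}h(i)$ are explicit and Azuma--Hoeffding applies. For $\estat{k}$, the $u\!\to\!v$ referral count has conditional increments $\mathbf 1\{Z_{\tau'}=u\}\,p^G_{X_{\tau'},v}$, where $p^G_{iv}:=\sum_{j\in V_v}P_{ij}=\pbtr{Z_iv}+O(\sqrt{\log N/N})$ on $\mathcal{E}_0$; this gives $\hat Q_{uv}=\pbtr{uv}\hat Q_{u\ast}+O(\sqrt{\log n/n})$, hence $\epbtr{uv}=\pbtr{uv}+O(\sqrt{\log n/n})$ and, through the explicit formula~\eqref{def: estat}, $\estat{k}\approx\pstat{k}$. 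For $\edeg{k}$ and $\emfunc{k}$, take $h(x)=\mathbf 1\{x\in V_k\}f(x)/\deg{x}$ with $f\in\{1,y\}$: the increment is $g_k^f(X_{\tau'})/\deg{X_{\tau'}}$, and on $\mathcal{E}_0$ the degree factor cancels so that $g_k^f(j)/\deg{j}=(\pbtr{Z_j,k}/B_{k\ast})\sum_{i\in V_k}f(i)\,(1+O(\sqrt{\log N/N}))$ depends on $X_{\tau'}$ only through its block; a second martingale gives $\sum_\tau\pbtr{Z_{\tau'},k}=n_k(1+O(\sqrt{\log n/n}))$, and combining yields $\sum_{\tau\in\tree_k}f(X_\tau)/\deg{X_\tau}=(n_k/B_{k\ast})\sum_{i\in V_k}f(i)\,(1+O(\sqrt{\log n/n}))$ --- the ratio over $f=y$ and $f=1$ is $\emfunc{k}\approx\mu_k$, and $f=1$ alone is $\edeg{k}\approx\mdegb{k}$. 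A preliminary lemma, again via the BFS martingales together with $p^G_{iv}\ge c>0$ (assumption~\ref{ass: dense}) and the $\Omega(n)$ internal tree vertices forced by assumption~\ref{ass: degree}, shows every block is visited $\Theta(n)$ times and emits $\Theta(n)$ referrals, so the denominators $n_k$, $\hat Q_{u\ast}$, $\epbtr{vk}$, $\sum_\ell\estat{\ell}/\edeg{\ell}$ are bounded below; collecting the $O(1)$ many Azuma failures bounds the total walk-side failure probability by $\eps'$.

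The main obstacle is the within-block step --- Claims~\ref{claim:degrees-conc} and~\ref{claim:emfunc-conc} --- for an \emph{arbitrary}, possibly very shallow and strongly seed-biased, sampling tree. Standard RDS analyses lean on the walk approaching the stationary distribution of $P$ on $G$, which simply fails when $n\ll N$; the point is that it is unnecessary here, because the exact conditional expectation $g_k^f(X_{\tau'})/\deg{X_{\tau'}}$ depends on the current vertex only through its \emph{block}, once the node-specific factor $\theta_{X_{\tau'}}$ has cancelled against $\deg{X_{\tau'}}$. Making that cancellation legitimate --- i.e.\ establishing the concentration of $g_k^f(j)$ at the relative scale $O(\sqrt{\log N/N})$ \emph{simultaneously over all $j$}, while correctly handling the mild dependence between $\mathbf 1\{\{i,j\}\in E\}$ and $\deg{i}$ inside $\mathbf 1\{\{i,j\}\in E\}/\deg{i}$ --- is the technical heart of the proof; once it is in place, the remainder is bookkeeping of $O(1)$-many error terms each of size $O(\sqrt{\log n/n})$.
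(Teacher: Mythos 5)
Your proposal is correct in substance and reaches the same three pillars as the paper (concentration of $\estat{k}$, of $\edeg{k}$, and of $\emfunc{k}$, then the same assembly via $\pstat{k}/\mdegb{k}\propto N_k$), and your treatment of $\estat{k}$ --- Azuma--Hoeffding along a topological/BFS ordering, with the lower bound on referral counts coming from a uniform lower bound on block-landing probabilities and the $\Omega(n)$ internal vertices forced by the bounded tree degree --- is essentially the paper's Claim~\ref{claim:estat-conc}. Where you genuinely diverge is the within-block step. The paper proves a \emph{two-step within-block mixing} statement (Claim~\ref{claim:two-step-mixing}, built on the weighted two-edge-path concentration of Claim~\ref{claim:degreetwos}) and then runs grandparent--grandchild martingales over the even and odd levels of $\tree$, which is why it needs increment bounds like $(c_d-1)^2 c_y$ and some even/odd bookkeeping. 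You instead fold the function into a one-step graph-level quantity $g_k^f(j)=\sum_{i\in V_k}\ind\{\{i,j\}\in E\}f(i)/\deg{i}$, show it concentrates uniformly in $j$, and observe that on that event the one-step conditional expectation $g_k^f(X_{\tau'})/\deg{X_{\tau'}}$ depends on the parent only through its block; a single Azuma argument (plus a second one identifying $\sum_\tau \pbtr{Z_{\tau'},k}$ with $n_k$) then yields Claims~\ref{claim:degrees-conc} and~\ref{claim:emfunc-conc} simultaneously by taking $f\in\{1,y\}$. This is a legitimate and arguably more direct route: it avoids the two-step mixing claim and the even/odd decomposition, and the single-vertex increments are automatically $O(1/N)$, at the cost of needing the uniform concentration of $g_k^f(j)$, which is of the same difficulty as the paper's Claim~\ref{claim:degreetwos}. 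One caveat: your justification of that concentration via a plain bounded-differences inequality is not right as stated, since flipping an edge at a low-degree vertex can change $f(i)/\deg{i}$ by $\Theta(1)$, so the worst-case Lipschitz constants are not $O(1/N)$; the clean fix is exactly the paper's device in Claim~\ref{claim:degreetwos}: write $\deg{i}=\ind\{\{i,j\}\in E\}+\deg{}_{-j}(i)$, condition on the concentration of the degrees excluding $j$ (which are independent of the edges incident to $j$), and then apply Hoeffding to the remaining independent indicators $\ind\{\{i,j\}\in E\}$ with increments $O(1/N)$, followed by a union bound over $j$, $k$, $f$. With that repair, and stating the $\emfunc{k}$ error additively (so that blocks with $\mfunc_k=0$ cause no division issue), your argument goes through and delivers the same $O(\sqrt{\log n/n})$ rate.
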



A direct consequence of Theorem \ref{thm: consistency} is that the bias and standard deviation 
decay at rate $O(n^{-1/2})$ up to log factors.  This does not hold for the traditional VH estimator \cite{heckathorn1997respondent}, since its standard deviation decays at a rate slower than $O(n^{-1/2})$ \cite{rohe2015network}, which we also show by example in Appendix \ref{app:negative}.  For the recent GLS-based estimators proposed in \cite{roch2017generalized}, it is shown that their standard deviation decays at rate $O(n^{-1/2})$ as $n$ goes to infinity for a fixed network size, but no finite size
guarantees are provided. 

Assumptions \ref{ass: dense} and \ref{ass: dense2} require the graph to be dense.  In the following section, we show through simulations that the PS estimator also works well on sparse graphs.

\section{Simulations}\label{sec: simulation}
This section compares the PS estimator to the  VH and fGLS estimators on simulated networks (in Section \ref{sec: sim}) as well as social networks collected by the National Longitudinal Study of Adolescent Health (Add Health Networks) (in Section \ref{sec: addhealth}), both with simulated RDS samples.  In both cases, the PS estimator has smaller variation than the VH and fGLS estimators.

\subsection{Simulated Networks}\label{sec: sim}

We simulated 100 random social networks by DC-SBM with $10^5$ nodes, expected average degree $100$, and $K = 2$ blocks with the same sizes.  The stochastic matrix $B$ was chosen proportional to $$\left(\begin{matrix} 
0.95 &0.05\\
0.05 &0.95
\end{matrix}\right).$$  We simulated the binary outcomes to be perfectly aligned with one of the block labels.  

On each social network, we generated RDS samples by link tracing without replacement.  We randomly sampled the seed proportionally to the node degree.  Then, for each participant $\tau$ in the sample, we recruited $R_{\tau}\in\mathbbm{N}$ number of friends, where $R_{\tau} \stackrel{\text{iid}}{\sim} \text{Poi}(2)$.  The recruiting process stopped when there were 1000 participants in the RDS sample.  If it terminated before recruiting 1000 participants, then we re-started the recruiting process. We generated 200 different RDS samples on each network.  For each RDS sample, we computed the VH, fGLS, and PS estimators.  On each network, we computed the absolute bias, standard deviation, and RMSE of the 200 estimators of each type.   In the simulations, we computed the fGLS estimator as in \cite{roch2017generalized}, which re-weights the outcome $Y$ to adjust for the sampling bias.  

Figure \ref{fig: sim_main} shows that the PS estimator has smaller variation than the VH and fGLS estimators in terms of absolute bias, standard deviation, and RMSE.

\begin{figure}[H]\centering 
    \includegraphics[width=1\columnwidth]{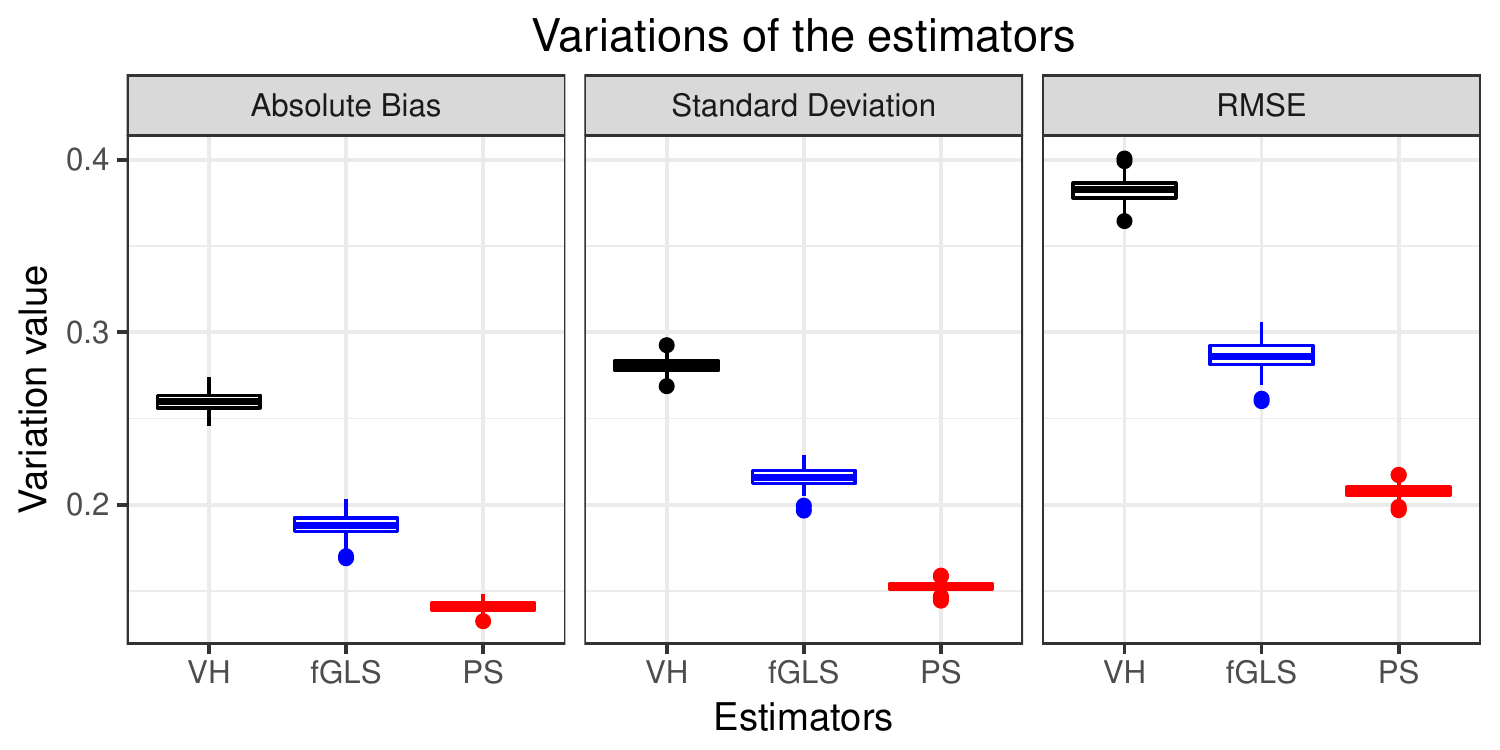}
	\caption{\textbf{Comparisons on the Simulated Networks.} The figures present the variations for the VH, fGLS and PS estimators on each of the 100 simulated networks. Each panel corresponds to a different variation, including absolute bias, standard deviation, and RMSE. Each data points represents for a variation value for a type of estimator on a network.  In each panel, for each type of estimator, we use a corresponding box plot to show the distribution of the variation values of the 100 simulated networks. 
	}
	\label{fig: sim_main}
\end{figure}

Note that there are some factors that may affect the performance of the estimators, such as (1) bottlenecks in the social network (2) the alignment of the block labels $z(i)$ with the variable of interest $y(i)$, and (3) the network density, etc.   We explored these factors and how they affected the performance of the estimators.  More explorations on other factors including network sizes and sample sizes are in Section \ref{sec: sim_factor_app} in the appendix.  The following simulations in Figure \ref{fig: sim_signal}, \ref{fig: sim_alignment} and \ref{fig: sim_density} have the same setting as in Figure \ref{fig: sim_main}, except that the values of the corresponding factor are made to vary.
 
\paragraph{Bottleneck}
Bottlenecks exist when there are much fewer connections across different blocks than within blocks.  Recall that, in the DC-SBM, the stochastic block matrix $B$ shows the average number of links between any two blocks.  We simulated the stochastic block matrix such that, 
$$B \propto \begin{pmatrix} 
 p & q \\
 q & p 
 \end{pmatrix},$$ 
 with $p + q = 1$ for identification.  We refer to the difference $p-q$ as the bottleneck strength. 
  With a larger bottleneck strength, there are more connections within blocks and fewer connections across blocks. 
  When there is no bottleneck (strength is zero), there is only one block in the network.  Figure \ref{fig: sim_signal} shows that the PS estimator has smaller variation than the fGLS and VH estimators, especially when there exists a bottleneck.  In particular, the PS estimator appears to reduce the seed bias and standard deviation 
  caused by bottlenecks much better than the fGLS and VH estimators.
 
  \begin{figure}[H]\centering 
 	\includegraphics[width=1\columnwidth]{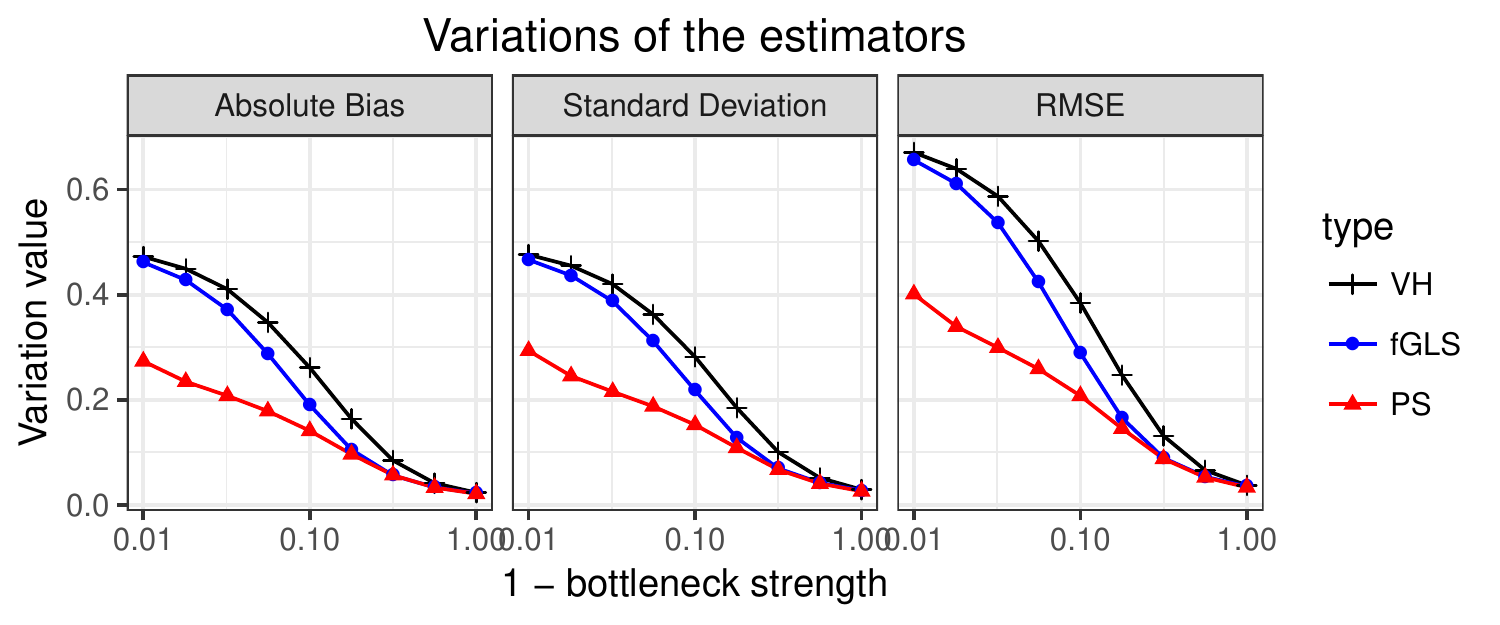}
 	\caption{\textbf{Comparisons on the simulated networks with different bottleneck strengths} }
 	\label{fig: sim_signal}
 \end{figure}
 
\paragraph{Alignment} We capture the alignment of the block labels and the variable of interest by the difference of the block-wise means of the variable of interest, i.e. $|\mu_1-\mu_2|$ with $K = 2$ blocks.  
Figure \ref{fig: sim_alignment} shows that the fGLS and PS estimators exhibit the largest improvement over the VH estimator when the block label perfectly aligns with the variable of interest (i.e., the alignment is $1$).  The three estimators perform equally well when the block-wise means of the variable of interest are equal (i.e., the alignment is $0$).  When the block label partially aligns with the variable of interest (i.e., the alignment is strictly between $0$ and $1$), the fGLS and VH estimators exhibit similar variation, but the PS estimator has smaller variation when the block-wise difference is over 0.4.
 
\begin{figure}[H]\centering 
	\includegraphics[width=1\columnwidth]{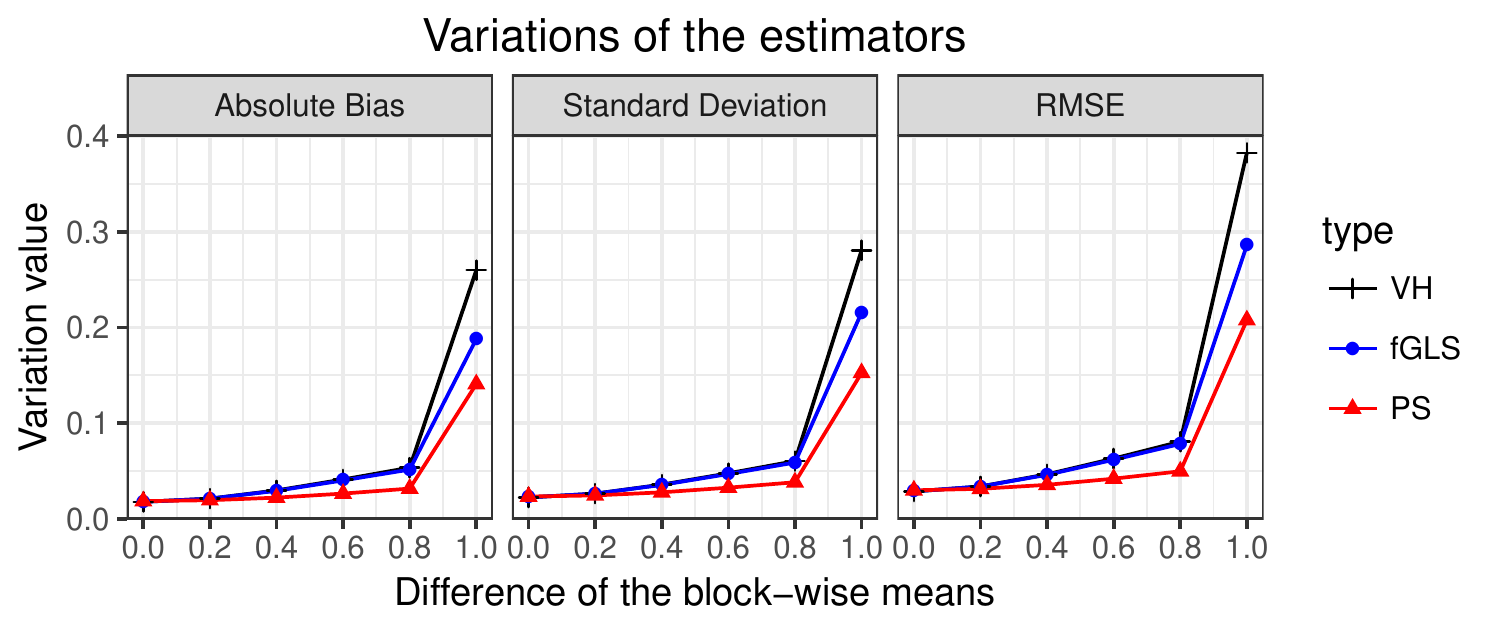}
	\caption{\textbf{Comparisons on the simulated networks with different alignments of the block labels} }
	\label{fig: sim_alignment}
\end{figure}

\paragraph{Network density}  We use the expected average degree of the network to quantify the network density.  Though Theorem \ref{thm: consistency} requires the networks to be dense enough, Figure \ref{fig: sim_density} shows that the estimators perform similarly on sparse networks.

\begin{figure}[H]\centering 
	\includegraphics[width=1\columnwidth]{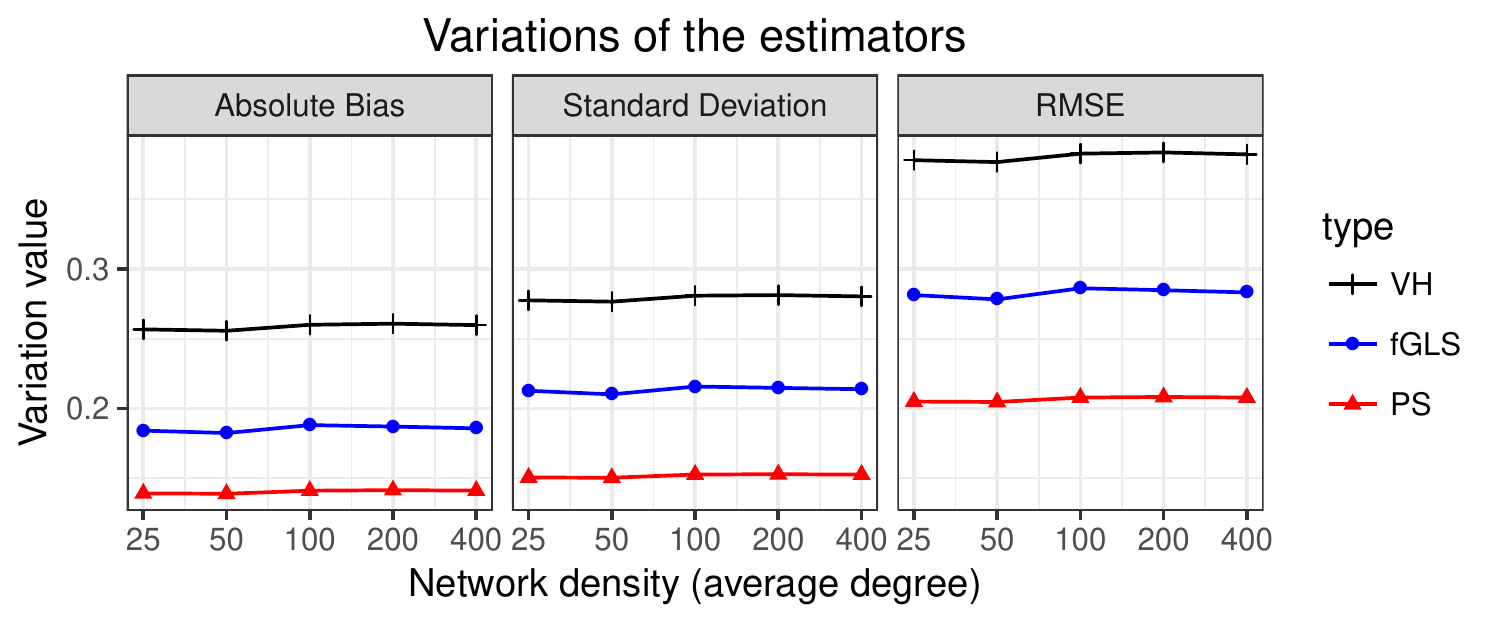}
	\caption{\textbf{Comparisons on the simulated networks with different density} }
	\label{fig: sim_density}
\end{figure}

\subsection{Add Heath Networks}\label{sec: addhealth}

In this section, we consider RDS simulations obtained by tracing contacts in social networks collected in the National Longitudinal Study of Adolescent Health (Add Health Networks).  This study collected a nationally represented sample of adolescents from grade 7 to 12 in the United States in the 1994-1995 school year. The sample covers 84 pairs of middle and high schools in which students nominated of up to five male and five female friends in their middle or high school network  (\cite{harris2011national}). In this analysis, we symmetrized all contacts to create a social network, and we restricted each network to its largest connected component.  These networks
were previously studied in \cite{goel2010assessing},  \cite{baraff2016estimating}, and \cite{roch2017generalized}.

We restricted our analysis to the 25 Add Heath networks with over 1000 nodes.  On each network, we simulated 200 different RDS samples, each with 500 participants.  On each RDS sample, we computed the VH, fGLS, and PS estimators.  In the simulation, we randomly sampled seed nodes proportional to node degrees.  We computed the absolute bias, RMSE, and standard deviation of the estimators on each network.  In the analysis we used the school label (middle school or high school) as the outcome and the grade label (7-12) as the block labels.  

The recruitment process was similar to that in Section \ref{sec: sim}, but without replacement.  In this case, each person could be recruited no more than once.  For each participant $\tau$, if they had fewer number of unrecruited friends than $R_{\tau}$, then we recruited all of their unrecruited friends.  

Figure \ref{fig: add_health} shows the variation of the estimators.  Overall, the PS estimator has substantially smaller variation than the fGLS and VH estimators.

\begin{figure}[H]\centering 
		\includegraphics[width=1\columnwidth]{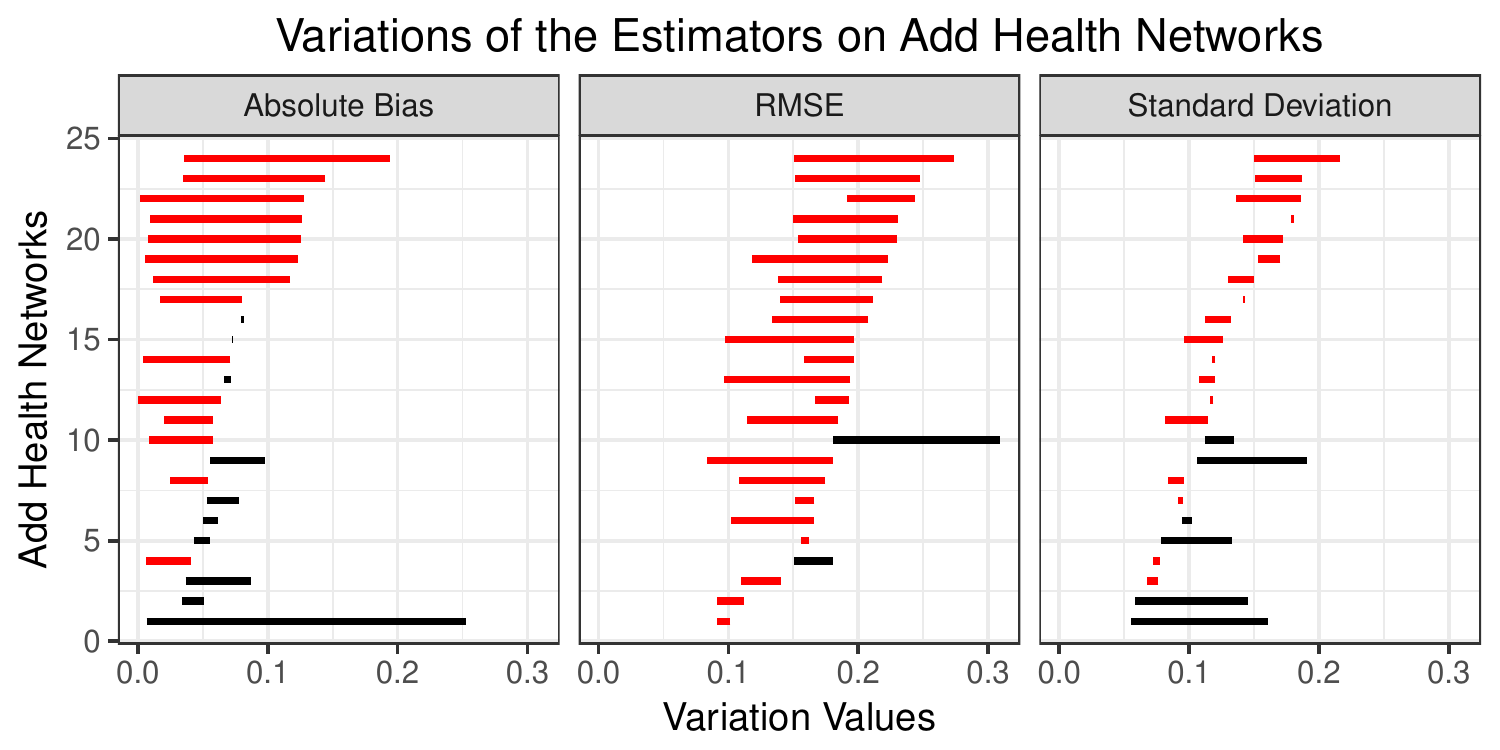}
	\caption{\textbf{Comparisons on the Add Health Networks.} The figures present the variations for the VH/fGLS and PS estimators on each of the 25 Add Health Networks. Each panel corresponds to a different variation, including absolute bias, RMSE, and standard deviation. In each panel, the horizontal axis corresponds to the variation value and the vertical axis corresponds to different networks, ordered by variation value of the baseline (VH/fGLS) estimator.  The baseline estimator is the one between VH and fGLS estimators with the smaller variation value, i.e. the better between VH and fGLS estimators.  Each line connects the variation value for the baseline estimator to the variation value of the PS estimator. If the line is red, then the PS estimator has a smaller variation.}
	\label{fig: add_health}
\end{figure}

\section{Discussion}\label{sec: discuss}
RDS has been widely used in studying marginalized populations.  But the estimators derived from RDS samples have suffered from high variance.  This is due to two related issues (1) the complicated network dependence of the RDS samples, and (2) seed bias caused by bottlenecks.  In this paper, we introduced post-stratification to RDS and provided a novel estimator.  Our easy-to-compute PS estimator reduces seed bias.  We derived some theoretical results for the PS estimator, showing its bias and standard deviation decay at $O(n^{-1/2})$ (up to log factors) under the degree-corrected stochastic block model.  This is the first estimator with such guarantees.  Though we require the networks to be dense in theory, we showed through simulations that the estimator performs similarly on sparse networks.

One future direction is how to select the block labels in practice. In \cite{roch2017generalized}, an approach for selecting block labels using eigenvalues of the block-wise transition matrix $\hat{Q}$ is proposed.  Further discussions on this issue would be helpful to apply the PS (and fGLS) estimators.

\bibliography{foo}
\bibliographystyle{unsrtnat}

\appendix

\section{More Simulations}\label{sec: sim_factor_app}

In this section, we explore how network sizes and sample sizes affect the performances of RDS estimators.  The simulation settings are the same as in Section \ref{sec: sim}.  Figure \ref{fig: sim_networksize} shows the estimators perform similarly with different sizes of networks.  Figure \ref{fig: sim_samplesize} shows the RDS estimators have smaller variation with larger sample sizes.  

\begin{figure}[H]\centering 
	\includegraphics[width=1\columnwidth]{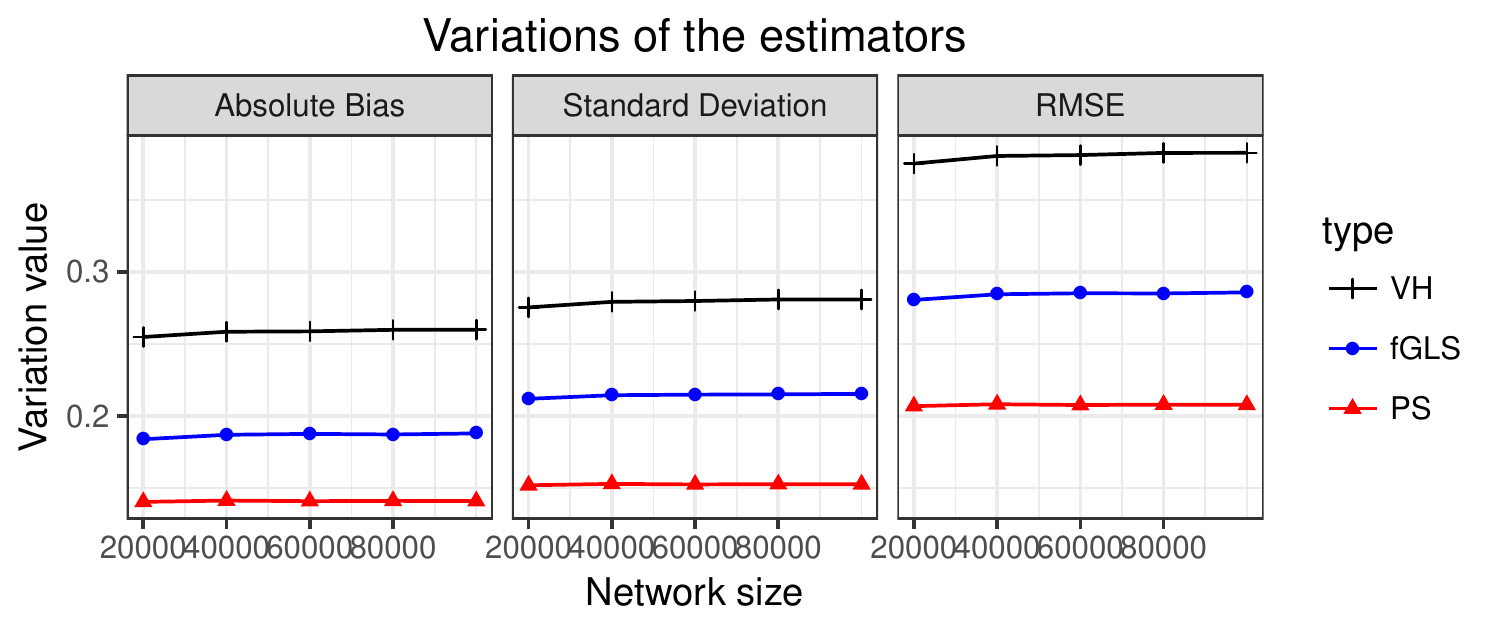}
	\caption{\textbf{Comparisons on the simulated networks with different sample sizes.} In the simulations, we control the densities for the networks to be similar.  For each network with size $N$, we set the expected average degree as $\lfloor\sqrt{N}/3\rfloor$, where $\lfloor c \rfloor$ denotes the integer part of $c$ for any constant $c\in \mathbbm{R}$. }
	\label{fig: sim_networksize}
\end{figure}

\begin{figure}[H]\centering 
	\includegraphics[width=1\columnwidth]{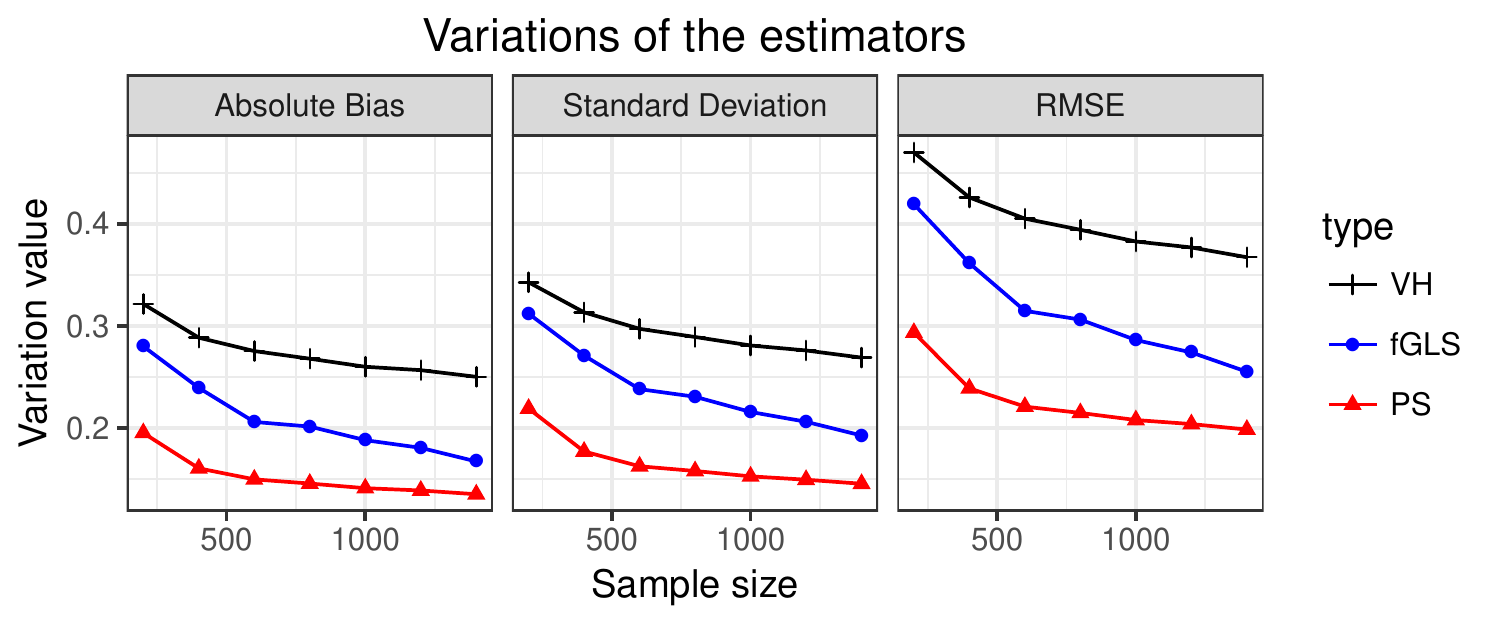}
	\caption{\textbf{Comparisons on the simulated networks with different sample sizes.} }
	\label{fig: sim_samplesize}
\end{figure}



\section{Proof of the main theorem}\label{sec: proof}

\subsection{Notation}

	For each node $i \in V$, we denote its neighborhood in the social network $G$ as
	$
	\neighb(i)
	=
	\left\{
	j \in V\,:\,\{i,j\} \in E
	\right\}
	$
	and its neighborhood within block
	$k$ as
	$
	\neighb(i;k)
	=
	\left\{	
	j \in \ve_k\,:\,\{i,j\} \in \ed
	\right\}.
	$
	We denote by $d(i;k) = |\neighb(i;k)|$ the size of the latter.  
	The degree of node $i$ is denoted $d_i = |\neighb(i)|$ and we have $d_i  = \sum\limits_{k}d(i;k)$. 

    While the RDS sampling procedure is a random walk on the social network $G$, under a dense DC-SBM our analysis relies on establishing an approximation of the process by a ``population-level'' random walk on blocks.  We define the block transition probability at node $i\in V$ by
    \begin{equation}\label{def: transition_block}
    p_{uv}(i) = \P[Z_{\tau} = v\,|\, X_{\tau'} = i, z(i) = u] =
	\frac{d(i;v)}{\sum_{k} d(i;k)}
	\ind{\{
	z(i) = u
	\}},
    \end{equation}
    for any blocks $u,v$ and any sample $\tau \in \mathbb{T}$. Recall that, for any sample $\tau\in\mathbbm{T}$, we denote its parent as $\tau'$. 
	
	Under our assumptions, 
	$B_{uv}$ is the expected number of edges between blocks $u\neq v$; indeed 
	$$
	\sum_{i\in V_u, j \in V_v} 	\P[\{i,j\} \in E]
	= \sum_{i\in V_u, j \in V_v} \theta_i \theta_j B_{uv}
	= B_{uv} \sum_{i\in V_u} \theta_i \sum_{j \in V_v} \theta_j
	= B_{uv}.
	$$
	Recalling the matrix $Q = B/m$, where $B$ is the matrix in the definition of the DC-SBM model \eqref{def: dc-sbm} and $m = \ind^T B \ind$, the population block transition probability is given by
	\begin{equation}
	\label{eq:pbtr-def}
	\pbtr{uv} 
    = \frac{\aff{uv}}{\aff{u\ast}}
	= 
	\frac{Q_{uv}}{Q_{u\ast}},
	\end{equation}
    for any two blocks $u,v$.
    We refer to 
	\begin{equation}
	\label{eq:pstatvec}
	\pbtrmat^{B}
	=
	\left(
	\pbtr{uv}
	\right)_{uv}.
	\end{equation}
	as the population transition matrix on blocks.
	Recall that its unique 
	stationary distribution is $\pstatvec = (\pstat{k})_k$.

For each block $k\in\{1,\dots, K\}$, 
$n_k =  |\mathbbm{T}_k|$. 
We also define the number of referrals from block $k$ to be
	\begin{equation}\label{def: n_k'}
	n_{k'} = \sum\limits_{\tau\in\mathbbm{T}}\ind\{Z_{\tau'} = k\}.
	\end{equation} 
	For any two blocks $u,v\in\{1,\dots, K\}$, we define the number of referrals between block $u$ and block $v$ as 
	\begin{equation}\label{def: n_u'v}
	n_{u'v} = \sum\limits_{\tau\in\mathbbm{T}}\ind\{Z_{\tau'} = u, Z_{\tau} = v\}.  
	\end{equation}
	Note that $\hat{Q}_{uv} = n_{u'v}/n$ and $\hat{P}^B_{uv} = n_{u'v}/n_{u'}$ 
The elements of the estimated block transition matrix $\hat{P}^B$ in \eqref{def: hat_QR} can be rewritten as $\hat{P}^B_{uv} = \etrans_{u'v}/\etrans_{u'}$.  We use $\epbtr{uv}$ to denote these quantities, i.e.,
$$
	\epbtr{uv}
	:=
	\frac{ \etrans_{u'v} }
	{ \etrans_{u'} }.
$$

To summarize, for any blocks $u,v$, the quantities $p_{uv}(i)$, $\tilde{p}_{uv}$ and $\hat{p}_{uv}$ represent respectively the block transition probability at node $i\in G$, the population block transition probability, and the estimated block transition probability.

\subsection{Proof}
The proof of Theorem~\ref{thm: consistency} follows from a series
of claims. We begin with a sketch of the proof in this section.  
\begin{enumerate}
\item Under the dense DC-SBM, random walk is mixing fast within each block (Claims \ref{claim:degreetwos} and \ref{claim:two-step-mixing}). 
This plays a key role in estimating block-wise means, for which we use the VH estimator (Claims \ref{claim:emfunc-conc}, \ref{claim: help_degree}, and \ref{claim:close}).

\item To estimate block proportions, we use the stationary distribution of block-wise transition matrix, which is the main, non-trivial contribution of this work. Indeed, the standard empirical frequency gives an estimate with much larger variance (see Section \ref{app:negative}). Instead we estimate the transition matrix between blocks, which is a ``more local'' quantity in the sense that it is not affected strongly by the seed, and compute its stationary distribution.  As a result, block-wise transition probabilities are highly concentrated around their true value under the Markov chain on the blocks; their stationary distributions are also close to each other (Claim \ref{claim:block-transition}, Claim \ref{claim:estat-conc}).  
\end{enumerate}
Note that there are two sources of randomness, the social network $G$ and the $\mathbbm{T}$-indexed random walk.  Claims \ref{claim:degrees}-\ref{claim:two-step-mixing} are concerned with the randomness of $G$, while Claims \ref{claim:estat-conc}-\ref{claim:close} deal with the random walk.
\begin{center}
\begin{tikzpicture}[
 every matrix/.style={ampersand replacement=\&,column sep=3cm,row sep=0.2cm},
c/.style={draw,thick,rounded corners,fill=yellow!20,inner sep=.2cm},
to/.style={->,>=stealth',shorten >=1pt,thick},
every node/.style={align=center}]

  \matrix{
       \ \node[c] (mu1) { $\emfunc{k}$
       };
      \& \node[c] (mu2) { $\mfunc_k$
   }; \\
       \ \node[c] (pi1) { $\estat{k}$ 
       };  
       \&  \node[c] (pi2) { $\pstat{k}$ 
       };  \\
        \node[c] (d1) { $H^{(\delta_k)}$ 
       };  
       \&  \node[c] (d3) { $\delta_k^{(B)} = N_k^{-1}B_{k\ast}$ 
       }; \\ 
       \ \node[c] (u1) { $\mvh$ 
       };  
       \&  \node[c] (u2) { $\mu_{\text{true}}$ 
       };  \\
  };

   \draw[to] (mu1) -- 
  node[midway,above] {Claim \ref{claim:close}} (mu2);
   \draw[to] (pi1) -- node[midway,above] {Claim \ref{claim:estat-conc} 
} (pi2);
 \draw[to] (d1) -- node[midway,above] {Claim \ref{claim:degrees-conc} 
} (d3);
\draw[to] (u1) -- node[midway,above] {Combine above
} (u2);



 \end{tikzpicture}
\end{center}

Throughout, $\eps > 0$ is as in the statement of the theorem.

\paragraph{High-probability properties of the social network}
We first use standard concentration inequalities to control the degrees
of $G$. 
Recall that under the DC-SBM the expectation of $\deg{}(i;v)$ is $\theta_i B_{uv}$.

\begin{claim}[Degrees are concentrated]
	\label{claim:degrees}
Under the DC-SBM, there exists $c_1 > 0$ (depending on $\eps$ but not on $N$) such 
	that,
with probability $1 - \eps/2$ over the choice of $\gr$, the following event holds:
simultaneously for 
all pairs of blocks $u,v$ and all nodes $i \in \ve_u$,
$$
\left|
\frac{\deg{}(i;v)}{\theta_i\aff{uv}}
- 1 
\right|
\leq
c_1 \sqrt{\frac{\log \nve}{\nve}}.
$$	
\end{claim}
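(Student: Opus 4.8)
The plan is to recognize $d(i;v)$, for fixed $i$ and $v$, as a sum of independent indicator variables, identify its mean, apply a multiplicative Chernoff bound to control the relative deviation, and finally union-bound over the $N$ nodes and the $K$ target blocks. Write $i \in \ve_u$ and $d(i;v) = \sum_{j \in \ve_v}\mathbf{1}\{\{i,j\}\in\ed\}$. Under the DC-SBM the $N_v$ edge indicators appearing here are mutually independent with $\P[\{i,j\}\in\ed] = \theta_i\theta_j B_{uv}$, so using the normalization $\sum_{j\in\ve_v}\theta_j = 1$,
$$\E[d(i;v)] = \theta_i B_{uv}\sum_{j\in\ve_v}\theta_j = \theta_i B_{uv},$$
which is exactly the normalizing quantity appearing in the claim; note that when $v = u$ the self-loop term $j = i$ is included consistently on both sides, so it creates no discrepancy. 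Assumptions \ref{ass: dense} and \ref{ass: dense2} then give $c_-^2 N \le \theta_i B_{uv} \le c_+^2 N$, i.e., the mean is of order $N$.

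Next I would invoke the multiplicative Chernoff bound: for a sum $S$ of independent $\{0,1\}$-valued random variables with mean $\lambda$ and any $t \in (0,1)$, $\P[|S - \lambda| \ge t\lambda] \le 2\exp(-t^2\lambda/3)$. Taking $\lambda = \theta_i B_{uv}$ and $t = c_1\sqrt{\log N/N}$, and using $\lambda \ge c_-^2 N$, the exponent is at least $c_-^2 c_1^2(\log N)/3$, so for each pair $(i,v)$,
$$\P\!\left[\left|\frac{d(i;v)}{\theta_i B_{uv}} - 1\right| \ge c_1\sqrt{\frac{\log N}{N}}\right] \le 2N^{-c_-^2 c_1^2/3}.$$
A union bound over the $N$ nodes $i$ and the $K$ blocks $v$ bounds the probability that the event in the claim fails by $2KN^{\,1 - c_-^2 c_1^2/3}$. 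For $c_1$ large enough that $1 - c_-^2 c_1^2/3 < 0$, this expression is decreasing in $N$, hence bounded by its value at the smallest admissible $N$; enlarging $c_1$ further (still depending only on $\eps$, $K$ and $c_-$, not on $N$) makes it at most $\eps/2$, as required.

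I do not foresee a substantive obstacle: this is a textbook concentration-plus-union-bound estimate. The only details to attend to are the self-loop bookkeeping in the mean computation when $v = u$ (handled above), and the fact that the Chernoff bound requires the relative threshold $t = c_1\sqrt{\log N/N}$ to lie below $1$, which holds once $N$ is large; for the finitely many small $N$ where it does not, one simply observes that $0 \le d(i;v) \le N_v \le c_+ N$ together with $\theta_i B_{uv} \ge c_-^2 N$ forces $|d(i;v)/(\theta_i B_{uv}) - 1|$ to be bounded by a universal constant, so that for $c_1$ large the inequality in the claim holds there deterministically.
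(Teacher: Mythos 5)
Your proposal is correct and follows essentially the same route as the paper: write $d(i;v)$ as a sum of $N_v$ independent edge indicators with mean $\theta_i B_{uv}$, apply an exponential concentration bound, and union-bound over nodes and blocks. The only cosmetic difference is that you use a multiplicative Chernoff bound directly, whereas the paper applies Hoeffding's inequality in additive form and then divides by $\theta_i B_{uv} = \Theta(N)$; your explicit handling of self-loops and small $N$ is a harmless extra.
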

\noindent We let $\eventdeg$ be the event in the claim.
\begin{proof}[Proof of Claim~\ref{claim:degrees}]
Fix blocks $u,v$ and $i \in \ve_u$. Under the DC-SBM, each node $j$ in block $v$ connects with node $i$ independently with probability $\theta_i \theta_j \aff{uv}$. Hence we can write
$\deg{}(i;v)$ as a sum of $\bsize{v}$ independent indicators, whose overall expectation is $\theta_i \aff{uv}$,
where we used that $\sum\limits_{j\in V_v}\theta_j = 1$.
By Hoeffding's inequality \cite{hoeffding1963probability}, for any constant $c_1'>1$, by choosing $c_1 > 0$ large enough
\begin{eqnarray}
&&\P\left[
\left|
\deg{}(i;v)
-  \theta_i\aff{uv}
\right|
>
c_1 \sqrt{\nve \log \nve}
\right]\nonumber\\
&&\leq
2
\exp\left(
- \frac{2  \left[
	c_1 \sqrt{\nve \log \nve}
	\right]^2}{\bsize{v}}
\right)\nonumber\\
&&\leq \nve^{-c_1'},\label{eq:deg-conc}
\end{eqnarray}
where we used that $\bsize{v} = \Theta(\nve)$
in the second inequality.
Taking a union bound over $u$, $v$ and $i$ gives
$$
\left|
\deg{}(i;v)
- \theta_i\aff{uv}
\right|
\leq
c_1 \sqrt{\nve \log \nve},
$$	
simultaneously for 
all $u,v$ and all $i \in \ve_v$ with probability
at least $1-K^2 \cdot \nve \cdot \nve^{-c_1'}$.  
Dividing by $\theta_i\aff{uv}$ and using $\theta_i = \Theta(\nve^{-1})$ for any node $i$ and $\aff{uw} = \Theta(\nve^2)$ for any blocks $u,w$, gives the result for appropriately
chosen $c_1, c_1' > 0$.
\end{proof}

The following claim will be useful to control the mixing rate within a block.
For any blocks $u, w, v$ and two distinct nodes $i\in \ve_u$, $j\in \ve_v$, we consider the number of two-edge paths from $i$ to $j$ in $\gr$ whose middle vertex is in block $w$, weighted by a quantity related to the
expected degree of the middle vertex under the DC-SBM: 
$$
\degtwow(i,j;w) = \sum\limits_{k\in V_w} \frac{1}{N\theta_k}
\ind\{k\in\neighb(i)\cap\neighb(j)\}.
$$
\begin{claim}[Two-edge paths]
	\label{claim:degreetwos}
	There exists $c_2 > 0$ such 
	that,
	with probability $1 - \eps/2$ over the choice of $\gr$, the following holds:
    simultaneously 
    for 
	all blocks $u, w, v$,
	and all $i\in \ve_u$, $j\in \ve_v$
    with $i \neq j$,
	$$
	\left|
	\frac{
		\degtwow(i,j;w)
	}
	{
		N^{-1}\theta_i\theta_j\aff{uw} \aff{wv}
	}
	-
	1 
	\right|
	\leq
	c_2 \sqrt{\frac{\log \nve}{\nve}}.
	$$
\end{claim}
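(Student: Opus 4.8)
Fix a block $w$ and two distinct nodes $i\in V_u$, $j\in V_v$ (so that $u=z(i)$ and $v=z(j)$ are determined). The plan is to view $\degtwow(i,j;w)$ as essentially a sum of independent bounded random variables indexed by the middle vertex $k\in V_w$, compute its expectation, apply Hoeffding's inequality, and conclude with a union bound over the $O(KN^2)$ triples $(w,i,j)$. For the expectation: when $k\in V_w$ with $k\notin\{i,j\}$, the potential edges $\{i,k\}$ and $\{j,k\}$ are distinct and hence present independently, so $\P[k\in\neighb(i)\cap\neighb(j)]=(\theta_i\theta_k\aff{uw})(\theta_j\theta_k\aff{wv})=\theta_i\theta_j\theta_k^2\,\aff{uw}\aff{wv}$. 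Using the normalization $\sum_{k\in V_w}\theta_k=1$, and bounding the at most two omitted terms $k\in\{i,j\}$ (each contributing $O(1)$ to $\degtwow(i,j;w)$), I would obtain
\[
\E\bigl[\degtwow(i,j;w)\bigr]=\frac{\theta_i\theta_j\,\aff{uw}\aff{wv}}{N}\bigl(1-O(N^{-1})\bigr)+O(1)=\frac{\theta_i\theta_j\,\aff{uw}\aff{wv}}{N}\bigl(1+O(N^{-1})\bigr),
\]
where the last equality uses $N^{-1}\theta_i\theta_j\aff{uw}\aff{wv}=\Theta(N)$, which follows from assumptions \ref{ass: dense} and \ref{ass: dense2}. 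In particular the mean already lies within the claimed relative error of the normalizer $N^{-1}\theta_i\theta_j\aff{uw}\aff{wv}$, since $N^{-1}=o(\sqrt{\log N/N})$.

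For the concentration step, I would write $\degtwow(i,j;w)=S_0+R$, where $S_0=\sum_{k\in V_w\setminus\{i,j\}}(N\theta_k)^{-1}\ind\{k\in\neighb(i)\cap\neighb(j)\}$ and $0\le R\le 2/(Nc_-)$. The summands of $S_0$ depend on pairwise disjoint sets of potential edges --- the pair $\{i,k\},\{j,k\}$ for each $k$ --- hence are mutually independent, and each lies in $[0,(N\theta_k)^{-1}]\subseteq[0,1/c_-]$ by assumption \ref{ass: dense2}; moreover $\sum_{k\in V_w}(N\theta_k)^{-2}\le N_w/c_-^2=\Theta(N)$ by assumption \ref{ass: balanced}. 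Hoeffding's inequality \cite{hoeffding1963probability} then gives, for an absolute deviation $t=\Theta\bigl(c_2\sqrt{N\log N}\bigr)$ (the value corresponding to relative error $c_2\sqrt{\log N/N}$ since the normalizer is $\Theta(N)$),
\[
\P\bigl[\,|S_0-\E S_0|>t\,\bigr]\le 2\exp\!\left(-\frac{2t^2}{\sum_{k\in V_w}(N\theta_k)^{-2}}\right)=2\exp\bigl(-\Theta(c_2^2\log N)\bigr)=2N^{-\Theta(c_2^2)}.
\]
Since $R=O(N^{-1})$ is negligible next to $t$, the same bound (up to a constant factor) controls $|\degtwow(i,j;w)-\E[\degtwow(i,j;w)]|$. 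A union bound over the $\le KN^2$ triples $(w,i,j)$ then makes the total failure probability at most $2KN^{2-\Theta(c_2^2)}$, which is $\le\eps/2$ once $c_2$ is taken large enough (depending on $\eps$ and the constants $c_-,c_+$). Dividing through by $N^{-1}\theta_i\theta_j\aff{uw}\aff{wv}=\Theta(N)$ and combining with the mean estimate above yields the claim after absorbing constants into $c_2$.

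The argument is, at bottom, a routine Hoeffding-plus-union-bound computation, so I do not expect a serious obstacle. The two points that require care are: (i) the independence bookkeeping --- in particular, the self-loop terms $k\in\{i,j\}$ share the edge $\{i,j\}$ and hence are not independent of one another or of it, which is precisely why they must be split off into the error term $R$ (this is harmless, since there are at most two of them, each $O(1)$, against a fluctuation scale of $\Theta(\sqrt{N\log N})$); and (ii) keeping track of the parameter magnitudes $\theta=\Theta(N^{-1})$, $B=\Theta(N^2)$, $N_k=\Theta(N)$ from assumptions \ref{ass: balanced}--\ref{ass: dense2} carefully enough to confirm that the absolute deviation scale $\Theta(\sqrt{N\log N})$ really does translate into the relative error $\Theta(\sqrt{\log N/N})$ asserted in the claim.
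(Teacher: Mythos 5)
Your proof is correct, but it takes a different (and in fact more direct) route than the paper. The paper argues in two stages: it first invokes Claim~\ref{claim:degrees} to condition on the event that $\deg{}(i;w)$ is close to $\theta_i\aff{uw}$, and then, conditioned on the edges from $i$ into block $w$, applies Hoeffding to the second-step edges from those $\deg{}(i;w)$ middle vertices to $j$, finally combining the two deviations. You instead exploit the observation that for distinct middle vertices $k,k'\in \ve_w\setminus\{i,j\}$ the summands of $\degtwow(i,j;w)$ depend on disjoint pairs of potential edges $\{i,k\},\{j,k\}$ versus $\{i,k'\},\{j,k'\}$, hence are mutually independent under the DC-SBM; this lets you compute the unconditional mean exactly (up to the $O(1)$ self-loop terms $k\in\{i,j\}$, correctly split off) and apply Hoeffding once, with a single union bound over $(w,i,j)$. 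What your approach buys is a cleaner argument that does not rely on Claim~\ref{claim:degrees} at all and avoids the conditional bookkeeping in \eqref{eq:degtwo-first-step}--\eqref{eq:degtwo-second-step}; what the paper's conditioning buys is essentially nothing beyond matching the style of its later martingale arguments, so the two are comparable in strength. One small slip: your bound on the remainder should read $0\le R\le 2/c_-$ rather than $2/(Nc_-)$, since $(N\theta_k)^{-1}\le 1/c_-$; this does not affect the conclusion, because an $O(1)$ remainder is still negligible against the deviation scale $\Theta(\sqrt{N\log N})$ and the normalizer $N^{-1}\theta_i\theta_j\aff{uw}\aff{wv}=\Theta(N)$.
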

\noindent We let $\eventdegtwo$ be the event in the claim.
\begin{proof}[Proof of Claim~\ref{claim:degreetwos}]
Fix blocks $u,w,v$, and nodes $i\not=j$.  By Claim \ref{claim:degrees}, we can choose $c_1''$ large enough such that 
\begin{equation}
\label{eq:degtwo-first-step}
\P\left[
\left|
\deg{}(i;w)
-  \theta_i\aff{uw}
\right|
>
c_1'' \sqrt{\nve \log \nve}
\right] \leq \nve^{-c_1'''},
\end{equation}
for some $c_1'''>1$.  

We treat the case where all blocks are distinct. The other cases are similar. Let $\mathcal{E}_{i,w}$ be the event that $\left|\deg{}(i;w) - \theta_i\aff{uw}\right|\leq c_1''\sqrt{\nve\log\nve}$ and note that $j\not\in\neighb(i;w)$.  
	Conditioned on $\mathcal{E}_{i,w}$, each of the $\deg{}(i;w)$ edges incident to $i$ and block $w$ has a corresponding endpoint $k\in V_w$ which itself connects to $j$---independently of all other such endpoints---with probability $\theta_k\theta_j\aff{wv}$.  
	Since $\degtwow(i,j;w)$ weighs this last edge by $(N\theta_k)^{-1}$, its expected contribution is $N^{-1}\theta_j\aff{wv}$. Moreover, the $\deg{}(i;w)$ possibly non-zero terms in the sum defining $\degtwow(i,j;w)$ are uniformly bounded by a constant by the assumption that $\theta_i = \Theta(N^{-1})$. Hence, we can apply 
	Hoeffding's inequality again, and by choosing $c_2'>1$ large enough we have 
	\begin{eqnarray}
	&& \P\left[
	\left|
	\degtwow(i,j;w) - N^{-1}\theta_j \aff{wv}\deg{}(i;w)
	\right|>c_2'\sqrt{\nve\log\nve}
	\,\middle|\, \mathcal{E}_{i,w}\right]\nonumber\\
	&& 
	\leq 
	2
	\exp\left(
	- \frac{2  \left[
		c_2' \sqrt{\nve \log \nve}
		\right]^2}{\deg{}(i;w)}
	\right)\nonumber\\
	&& 
	\leq 
	2
	\exp\left(
	- \frac{2  \left[
		c_2' \sqrt{\nve \log \nve}
		\right]^2}{\theta_i\aff{uw} + c_1'' \sqrt{\nve \log \nve}}
	\right)\nonumber\\
	&& 
	\leq \nve^{-c_2''},\label{eq:degtwo-second-step}
	\end{eqnarray}
	for some $c_2''>2$, where we used \eqref{eq:degtwo-first-step} in the second inequality
	and we used that $\theta_i = \Theta(N^{-1})$ and $\aff{uw} = \Theta(N^2)$ in the last inequality.

Combining~\eqref{eq:degtwo-first-step} and~\eqref{eq:degtwo-second-step},
and taking a union bound over $u$, $w$, $v$, $i$ and $j$ gives
$$
\left|
\degtwo(i,j;w)
-
N^{-1}\theta_i\theta_j\aff{uw}\aff{wv}
\right|
\leq
c_2''' \sqrt{\nve \log \nve},
$$
for a constant $c_2''' > 0$ chosen large enough.
Dividing by $N^{-1}\theta_i\theta_j\aff{uw}\aff{wv}$ and using again that
$\theta_i = \Theta(N^{-1})$ and $\aff{uv} = \Theta(N^2)$
gives the result,
for an appropriately chosen constant $c_2 > 0$.
\end{proof}

\paragraph{Properties of the walk} Before proving our main theorem,
we will also need some results about the behavior of simple random walk
on the network. We first show that, from any $i \in \ve_u$, the
probability of jumping to a vertex in block $v$ is close to the
population-level probability $\pbtr{uv}$.
\begin{claim}[Transitions between blocks]
	\label{claim:block-transition}
	There exists $c_3 > 0$ such that, conditioned on $\eventdeg$,
for any blocks $u,v$ and any $i \in \ve_{u}$	
	$$
\left|
\frac{\btr{i}{uv}}
{\pbtr{uv}}
-
1
\right|
\leq
c_3
\sqrt{
	\frac{\log \nve}{\nve}
}.
$$ 
\end{claim}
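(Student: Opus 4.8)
The plan is to obtain Claim~\ref{claim:block-transition} as a direct algebraic consequence of the degree concentration in Claim~\ref{claim:degrees}: once we condition on $\eventdeg$ there is essentially nothing probabilistic left to do. Recall that for $i\in\ve_u$ the transition probability in~\eqref{def: transition_block} is $\btr{i}{uv} = d(i;v)/d_i$ with $d_i=\sum_k d(i;k)$, while $\pbtr{uv} = \aff{uv}/\aff{u\ast}$.

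First I would restate Claim~\ref{claim:degrees} in multiplicative form: on $\eventdeg$, for every block $v$ we may write $d(i;v) = \theta_i\aff{uv}(1+\eta_{iv})$ with $|\eta_{iv}|\le c_1\sqrt{\log\nve/\nve}$, uniformly over $u,v$ and $i\in\ve_u$. Summing over $v$ and applying the triangle inequality to
\[
d_i - \theta_i\aff{u\ast} = \sum_v\big(d(i;v)-\theta_i\aff{uv}\big),
\]
the denominator $d_i$ inherits the same relative accuracy: $d_i = \theta_i\aff{u\ast}(1+\eta_i)$ with $|\eta_i|\le c_1\sqrt{\log\nve/\nve}$. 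Forming the ratio, the node-level weight $\theta_i$ cancels,
\[
\frac{\btr{i}{uv}}{\pbtr{uv}}
= \frac{d(i;v)}{d_i}\cdot\frac{\aff{u\ast}}{\aff{uv}}
= \frac{1+\eta_{iv}}{1+\eta_i},
\]
and, assuming $\nve$ is large enough that $c_1\sqrt{\log\nve/\nve}\le 1/2$,
\[
\left|\frac{\btr{i}{uv}}{\pbtr{uv}}-1\right|
= \frac{|\eta_{iv}-\eta_i|}{|1+\eta_i|}
\le \frac{2c_1\sqrt{\log\nve/\nve}}{1-c_1\sqrt{\log\nve/\nve}}
\le 4c_1\sqrt{\frac{\log\nve}{\nve}}.
\]
Taking $c_3 = 4c_1$ gives the claim; the remaining small values of $\nve$ are absorbed into $c_3$ using that, by Assumptions~\ref{ass: balanced}--\ref{ass: dense}, one has $\pbtr{uv} = \aff{uv}/\aff{u\ast}\ge c_-/(Kc_+)$ and $\btr{i}{uv}\le 1$, so the ratio is bounded by an absolute constant.

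I do not expect a genuine obstacle here: the only point requiring (minor) care is that the bound on the denominator $d_i$ be uniform in $i$, which is automatic since $\sum_v\theta_i\aff{uv} = \theta_i\aff{u\ast}$ turns the aggregated error into a convex combination of the individual errors, each controlled uniformly by Claim~\ref{claim:degrees}; and one must keep $c_1$ (hence $c_3$) independent of $\nve$, which is exactly what Claim~\ref{claim:degrees} provides. The substantive content of this step lies entirely in the already-established concentration of degrees.
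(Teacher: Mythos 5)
Your proposal is correct and follows essentially the same route as the paper: on $\eventdeg$ you bound the numerator $d(i;v)$ above and the denominator $d_i=\sum_k d(i;k)$ below (or vice versa) via Claim~\ref{claim:degrees}, the factor $\theta_i$ cancels, and the ratio $(1+O(\sqrt{\log \nve/\nve}))/(1-O(\sqrt{\log \nve/\nve}))$ is absorbed into a constant $c_3$. The only difference is cosmetic bookkeeping (explicit $\eta_{iv}$ notation and the small-$\nve$ remark), which the paper handles implicitly by choosing $c_3$ large enough.
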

\begin{proof}
	Fix $u,v$ and $i \in \ve_{u}$.
	Recall 
	$$
	\btr{i}{uv}
	=
	\frac{\deg{}(i;v)}{\sum_{k} \deg{}(i;k)}
	\quad \text{ and } \quad
	\pbtr{uv} 
	= 
	\frac{\aff{uv}}{\aff{u\ast}}.
	$$
	Under $\eventdeg$,
\begin{eqnarray*}
\btr{i}{u,v}
\leq
\frac{\theta_i\aff{uv} \left(1 + c_1  \sqrt{\frac{\log \nve}{\nve}} \right)}{\sum_{k} \theta_i\aff{uk} \left(1 - c_1  \sqrt{\frac{\log \nve}{\nve}} \right)}
\leq
\pbtr{uv} 
\left(1 + c_3  \sqrt{\frac{\log \nve}{\nve}} \right),
\end{eqnarray*}
for a constant $c_3 > 0$ large enough.
A similar inequality holds in the opposite direction.
\end{proof}

The previous claim also implies that
any step has a probability bounded away from $0$
of landing in any block.
\begin{claim}[Landing in a block]
	\label{claim:landing}
There is $p_* \in (0,1)$ such that,
conditioned on $\eventdeg$,
for any blocks $u,v$ and any $i \in \ve_u$, we have
$$
\btr{i}{uv} 
\geq p_*,
$$
provided $\nve$ is larger than a sufficiently large constant.
\end{claim}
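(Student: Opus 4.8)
The plan is to derive Claim~\ref{claim:landing} directly from Claim~\ref{claim:block-transition} together with the density assumption~\ref{ass: dense}, so this is essentially a short corollary. First I would bound the population transition probability from below: by definition $\pbtr{uv} = \aff{uv}/\aff{u\ast}$, and assumption~\ref{ass: dense} gives $\aff{uv} \geq c_- \nve^2$ for the numerator and $\aff{u\ast} = \sum_{w} \aff{uw} \leq K c_+ \nve^2$ for the denominator. Hence $\pbtr{uv} \geq c_-/(K c_+)$, a constant independent of $\nve$ and of the blocks $u,v$; set $p_* := c_-/(2 K c_+)$.

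Next, I would condition on $\eventdeg$ and invoke Claim~\ref{claim:block-transition}, which gives $\btr{i}{uv} \geq \pbtr{uv}\bigl(1 - c_3 \sqrt{\log \nve / \nve}\bigr)$ uniformly over all blocks $u,v$ and all $i \in \ve_u$. Taking $\nve$ larger than a constant (depending only on $c_3$, hence only on $\eps$ through Claim~\ref{claim:block-transition}) so that $c_3 \sqrt{\log \nve / \nve} \leq 1/2$ yields $\btr{i}{uv} \geq \frac{1}{2}\pbtr{uv} \geq p_*$, simultaneously over all $u,v,i$. Since $\btr{i}{uv}$ is a probability, we also have $p_* \leq \btr{i}{uv} \leq 1$, so in particular $p_* \in (0,1)$ (alternatively, $p_* < 1$ follows directly from $c_- < c_+$ and $K \geq 1$), which is the assertion.

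I do not expect any real obstacle in this argument; the only points requiring a little care are that the lower bound $c_-/(K c_+)$ on $\pbtr{uv}$ is genuinely bounded away from $0$ — which is exactly what the density assumption~\ref{ass: dense} provides — and that the threshold on $\nve$ can be chosen uniformly in $u,v,i$, which it can because the estimate in Claim~\ref{claim:block-transition} already holds simultaneously over all such choices under the event $\eventdeg$.
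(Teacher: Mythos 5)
Your proposal is correct and follows essentially the same route as the paper: the paper also takes $p_*$ strictly below $\min_{u,v}\pbtr{uv}$ and invokes Claim~\ref{claim:block-transition} to transfer this to $\btr{i}{uv}$ for $\nve$ large. Your version is slightly more explicit in using assumption~\ref{ass: dense} to show that $\pbtr{uv} \geq c_-/(Kc_+)$ uniformly in $\nve$, which is a worthwhile clarification since $B$ (and hence $\pbtr{uv}$) depends on $\nve$ and the paper leaves this uniformity implicit.
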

\begin{proof}
Let 
$$
0 <  p_* < \min_{u,v} \pbtr{uv}.
$$
The result then follows from Claim~\ref{claim:block-transition}.
\end{proof}

We next show that two steps of the walk are enough to mix within a block.  
\begin{claim}[Two steps suffice for within-block mixing]
	\label{claim:two-step-mixing}
For each sample $\tau\in\mathbbm{T}$, we denote its grandchildren as $\childtwo(\tau)$.  For a $(\mathbbm{T}, P)$-walk on $\gr$, 
there exists $c_4 > 0$ such that, on $\eventdeg$ and $\eventdegtwo$,
for all $\tau$ and $\tau_{**} \in \childtwo(\tau)$
$$
\left|
\frac{\P[X_{\tau_{**}} = j\,|\, X_{\tau} = i,\gr]}{ \theta_j \sum\limits_{k} \pbtr{uk} \pbtr{kv}}
-
1
\right|
\leq
c_4 
\sqrt{\frac{\log \nve}{\nve}},
$$
for all blocks $u,v$, and nodes $i \in \ve_u$,  $j \in \ve_v$
with $i \neq j$.
\end{claim}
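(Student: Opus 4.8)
The plan is to observe first that the quantity $\P[X_{\tau_{**}} = j \mid X_\tau = i, \gr]$ does not depend on the tree structure beyond $\tau_{**}$ being a grandchild of $\tau$: for a $(\tree,P)$-walk it is simply the $(i,j)$ entry of $P^2$. So there is no randomness of the walk left to handle, and it suffices to bound $(P^2)_{ij}$ for $\gr$ in the events $\eventdeg$ and $\eventdegtwo$. Since a DC-SBM graph is unweighted, $P_{ab} = \ind\{\{a,b\}\in\ed\}/\deg{a}$, so for $i \in \ve_u$ and $j \in \ve_v$ I would expand over the block $w$ of the intermediate vertex $\ell$:
\[
\P[X_{\tau_{**}} = j \mid X_\tau = i, \gr]
= \sum_{\ell \in \ve} P_{i\ell}\,P_{\ell j}
= \frac{1}{\deg{i}}\sum_{w}\sum_{\ell \in \ve_w} \frac{\ind\{\ell \in \neighb(i)\cap\neighb(j)\}}{\deg{\ell}}.
\]
The two terms with $\ell \in \{i,j\}$ (possible only through self-loops) are $O(1)$ while the full sum is $\Theta(\nve)$, so dropping them costs relative error $O(\nve^{-1})$, which is negligible compared with the target rate.

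Next I would bring in the network concentration. On $\eventdeg$, using $\deg{\ell} = \sum_k \deg{}(\ell;k)$ and summing the per-block bounds of Claim~\ref{claim:degrees}, every $\ell \in \ve_w$ satisfies $\deg{\ell} = \theta_\ell \aff{w\ast}\bigl(1 + O(\sqrt{\log\nve/\nve})\bigr)$, and likewise $\deg{i} = \theta_i \aff{u\ast}\bigl(1 + O(\sqrt{\log\nve/\nve})\bigr)$; since $\theta_\ell = \Theta(\nve^{-1})$ and $\aff{w\ast} = \Theta(\nve^{2})$, all these denominators are $\Theta(\nve)$ and so bounded away from $0$. Substituting and recalling $\degtwow(i,j;w) = \nve^{-1}\sum_{\ell\in\ve_w}\ind\{\ell\in\neighb(i)\cap\neighb(j)\}/\theta_\ell$, the inner sum equals $\nve\,\aff{w\ast}^{-1}\,\degtwow(i,j;w)$ up to a factor $1+O(\sqrt{\log\nve/\nve})$. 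Because $i \neq j$, Claim~\ref{claim:degreetwos} on $\eventdegtwo$ then gives $\degtwow(i,j;w) = \nve^{-1}\theta_i\theta_j\aff{uw}\aff{wv}\bigl(1+O(\sqrt{\log\nve/\nve})\bigr)$.

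Finally I would collect the pieces: the $\theta_i$ and $\nve$ factors cancel, and
\[
\P[X_{\tau_{**}} = j \mid X_\tau = i, \gr]
= \frac{\theta_j}{\aff{u\ast}}\sum_{w}\frac{\aff{uw}\aff{wv}}{\aff{w\ast}}\bigl(1+O(\sqrt{\log\nve/\nve})\bigr)
= \theta_j \sum_{w}\pbtr{uw}\,\pbtr{wv}\,\bigl(1+O(\sqrt{\log\nve/\nve})\bigr),
\]
using $\pbtr{uw} = \aff{uw}/\aff{u\ast}$. Since the sum over $w$ has only $K$ terms, all positive and all carrying the same multiplicative error bound, the aggregate relative error is still $O(\sqrt{\log\nve/\nve})$; absorbing $K$ and all universal constants into $c_4$, and noting that $\eventdeg,\eventdegtwo$ already hold uniformly over all $(i,j,w)$, yields the claim uniformly over $u,v$ and $i\in\ve_u$, $j\in\ve_v$ with $i\neq j$.

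I expect the only delicate point to be the bookkeeping of the three nested multiplicative errors — from $\deg{i}$, from $\deg{\ell}$ aggregated over $\ell \in \ve_w$, and from Claim~\ref{claim:degreetwos} — together with uniformity over the $\Theta(\nve)$ choices of $\ell$ and the $K$ choices of $w$. This is routine once one verifies that every denominator that appears ($\deg{i}$, $\deg{\ell}$, $\aff{u\ast}$, $\aff{w\ast}$) is $\Theta(\nve)$ or $\Theta(\nve^{2})$ on $\eventdeg$, hence bounded away from $0$, so that $1/(1+O(\sqrt{\log\nve/\nve})) = 1+O(\sqrt{\log\nve/\nve})$ throughout.
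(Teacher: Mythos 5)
Your proposal is correct and follows essentially the same route as the paper's proof: expand the two-step transition probability over the intermediate vertex, group by its block, control $\deg{i}$ and $\deg{\ell}$ via Claim~\ref{claim:degrees} on $\eventdeg$, identify the weighted common-neighbor sum with $\degtwow(i,j;w)$ and apply Claim~\ref{claim:degreetwos} on $\eventdegtwo$, then collect the nested multiplicative $1+O(\sqrt{\log \nve/\nve})$ factors over the $K$ blocks. The only cosmetic difference is your explicit removal of the possible self-loop terms $\ell\in\{i,j\}$ (unnecessary, since $\degtwow$ already includes them consistently), which you correctly note costs only a negligible $O(\nve^{-1})$ relative error.
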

\begin{proof}
To simplify
notation, the conditioning on $\gr$ is implicit throughout the proof.
Assume $\eventdeg$ and $\eventdegtwo$ hold. 
Fix blocks $u,v$ as well as nodes $i \in \ve_u$ and $j \in \ve_v$ with $i\neq j$.
Let $\tau_{**} \in \childtwo(\tau)$ and let $\tau_{*}$ be the ancestor of $\tau_{**}$ on $\tree$, which is necessarily a child of $\tau$.
Then, for some constants $c_4', c_4'' > 0$, using
$\eventdeg$ and $\eventdegtwo$
\begin{eqnarray*}
&&\P[X_{\tau'_{**}} = j\,|\, X_{\tau} = i]\\
&&= 
\sum_{t \in \ve}
\P[X_{\tau_{**}} = j\,|\, X_{\tau_{*}} = t ]\,
\P[ X_{\tau_{*}} = t \,|\, X_{\tau} = i]\\
&&= 
\sum_{t \in \neighb(i) \cap \neighb(j)}
\frac{1}{\deg{i}} \times
\frac{1}{\deg{t}}\\
&&\leq 
\sum_{t \in \neighb(i) \cap \neighb(j)}
\frac{1}
{\theta_t\aff{Z_t,\ast} \left(1 - c_1  \sqrt{\frac{\log \nve}{\nve}} \right)}
\times
\frac{1}
{\theta_i\aff{u\ast} \left(1 - c_1  \sqrt{\frac{\log \nve}{\nve}} \right)}\\
&&\leq 
 \left(1 + c_4'  \sqrt{\frac{\log \nve}{\nve}} \right)
\sum_{k} 
\frac{1}
{ \theta_i\aff{k\ast} \aff{u\ast} }
\sum_{t \in \neighb(i;k) \cap \neighb(j;k)}
\frac{1}
{ \theta_t}\\
&&= 
 \left(1 + c_4'  \sqrt{\frac{\log \nve}{\nve}} \right)
\sum_{k}\frac{ \nve \degtwow(i,j;k)}
{\theta_i\aff{k\ast} \aff{u\ast}}\\
&&\leq 
 \left(1 + c_4'  \sqrt{\frac{\log \nve}{\nve}} \right)
\sum_{k} \frac{\theta_i\theta_j\aff{uk}\aff{kv} \left(1 + c_2  \sqrt{\frac{\log \nve}{\nve}} \right)}
{\theta_i\aff{k\ast} \aff{u\ast}}\\
&&\leq 
\left(1 + c_4''  \sqrt{\frac{\log \nve}{\nve}} \right)
\theta_j  \sum_{k}  \pbtr{uk} \pbtr{kv},
\end{eqnarray*}
where recall that $\pbtr{uv}  = \aff{uv}/\aff{u\ast}$.
A similar inequality holds in the other direction. That implies the claim.
\end{proof}

\paragraph{Concentration of key estimates}
The PS estimator defined in~\eqref{def: s-vh} relies on three key estimates,
whose concentration we establish now. 

We begin with 
the concentration of $\estat{k}$ by showing that our estimates of block transition probabilities are concentrated, which boils down to proving that the $\epbtr{uv}$'s are concentrated.
Recall that Claim~\ref{claim:block-transition} implies that the block
transition probabilities are concentrated at each $i$, i.e., the $\btr{i}{uv}$'s are concentrated.  Proving that the estimate 
$\epbtr{uv}
=
\etrans_{u'v}/\etrans_{u'}$ itself is concentrated requires 
an argument. Indeed, as shown in Section~\ref{app:negative} below,
both the numerator and denominator of this estimator in general may have variance asymptotically much greater than $1/\etrans$.
Instead, we use the Markovian structure of the model to control the deviation of $\epbtr{uv}$. 
\begin{claim}[Concentration of block-wise steady-state probability estimates]
	\label{claim:estat-conc}
Conditioned on $\gr$ and $\eventdeg$,
 there exists $c_5 > 0$ such that, for any block $k$, with probability at least $1 - \eps'/2$,
$$
\left|
\frac{\estat{k}}
{\pstat{k}}
-
1
\right|
\leq
c_5
\sqrt{\frac{\log \etrans}{\etrans}}.
$$
\end{claim}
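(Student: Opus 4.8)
The plan is to prove Claim~\ref{claim:estat-conc} in two stages: first establish that each entry $\epbtr{uv}$ of the estimated block transition matrix is concentrated around its population value $\pbtr{uv}$, and then propagate this concentration through the (smooth) map that sends a transition matrix to its stationary distribution via the explicit formula $\estat{k} = [\sum_v \epbtr{kv}/\epbtr{vk}]^{-1}$, compared with $\pstat{k} = [\sum_v \pbtr{kv}/\pbtr{vk}]^{-1}$.

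\textbf{Step 1: concentration of the $\epbtr{uv}$'s.} Fix a block $u$. The key observation is that $\epbtr{uv} = \etrans_{u'v}/\etrans_{u'}$ is a \emph{ratio}, so multiplicative fluctuations in the numerator and denominator partially cancel; what we really want to control is the deviation of the conditional frequency of transitions into block $v$, among those steps of the walk that start in block $u$. Consider the sub-sequence of the $(\tree,P)$-walk consisting of all referrals $X_{\tau'}\to X_{\tau}$ with $Z_{\tau'} = u$; there are $\etrans_{u'}$ of them. By Claim~\ref{claim:landing}, each step of the walk lands in block $u$ with probability at least $p_*$, so by a Chernoff/Azuma argument $\etrans_{u'} = \Theta(\etrans)$ with probability at least $1 - \eps'/(4K)$ (this is where I would absorb a first union-bound cost). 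Now, conditioned on the walk being at some node $i\in\ve_u$, the next step lands in block $v$ with probability exactly $\btr{i}{uv}$, and Claim~\ref{claim:block-transition} tells us $|\btr{i}{uv}/\pbtr{uv} - 1| \le c_3\sqrt{\log\nve/\nve}$ uniformly in $i$. Hence, writing $\etrans_{u'v} = \sum_{\tau: Z_{\tau'}=u}\ind\{Z_\tau = v\}$ and forming the martingale difference sequence $M_\tau = \ind\{Z_\tau = v\} - \btr{X_{\tau'}}{uv}$ with respect to the natural filtration of the walk, Azuma--Hoeffding gives $|\sum_\tau M_\tau| \le c\sqrt{\etrans\log\etrans}$ with probability $1-\eps'/(4K^2)$. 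Combining with the two-sided bound from Claim~\ref{claim:block-transition} on the ``predictable'' part $\sum_\tau \btr{X_{\tau'}}{uv} \in \etrans_{u'}\pbtr{uv}(1 \pm c_3\sqrt{\log\nve/\nve})$, dividing by $\etrans_{u'} = \Theta(\etrans)$, and using $n \le N$ so that $\sqrt{\log N/N} \le \sqrt{\log n/n}$ up to constants, we conclude
$$
\left|\frac{\epbtr{uv}}{\pbtr{uv}} - 1\right| \le c_5' \sqrt{\frac{\log\etrans}{\etrans}},
$$
simultaneously over all $u,v$ with probability at least $1-\eps'/4$ after a union bound over the $K^2$ pairs.

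\textbf{Step 2: stability of the stationary-distribution formula.} Here I use the explicit closed form rather than generic perturbation theory, which keeps the argument elementary. By Claim~\ref{claim:block-transition} (and its analogue/consequence, effectively the lower bound $\pbtr{uv} \ge p_* > 0$ from Claim~\ref{claim:landing}), each population ratio $\pbtr{kv}/\pbtr{vk}$ is bounded above and below by universal constants. From Step~1, each estimated ratio satisfies $\epbtr{kv}/\epbtr{vk} = (\pbtr{kv}/\pbtr{vk})(1 \pm O(\sqrt{\log n/n}))$, since both the numerator and denominator are within a $(1\pm O(\sqrt{\log n/n}))$ factor of their population counterparts and these factors combine. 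Summing over the $K$ values of $v$ (a bounded number of terms, each bounded), $\sum_v \epbtr{kv}/\epbtr{vk} = (\sum_v \pbtr{kv}/\pbtr{vk})(1\pm O(\sqrt{\log n/n}))$, and this sum is bounded away from $0$ and $\infty$. Taking reciprocals preserves a $(1\pm O(\sqrt{\log n/n}))$ relative error on a quantity bounded away from zero, yielding $|\estat{k}/\pstat{k} - 1| \le c_5\sqrt{\log\etrans/\etrans}$ for an appropriate $c_5$, which is the claim. Collecting the failure probabilities from Step~1 gives the total bound $\eps'/2$ as stated (with room to spare).

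\textbf{Main obstacle.} The delicate point is Step~1: naively, $\etrans_{u'v}$ and $\etrans_{u'}$ are sums of dependent indicators along a tree-indexed walk whose individual variances need not be small, and—as the paper emphasizes in Section~\ref{app:negative}—a frequency count like $\etrans_{u'v}/\etrans$ alone can have variance far larger than $1/\etrans$ because of seed bias. The resolution, which is the crux of the argument, is that conditioning on the \emph{starting} block of each step makes the numerator a martingale with bounded increments whose compensator is pinned, via Claim~\ref{claim:block-transition}, within a tiny multiplicative window of $\etrans_{u'}\pbtr{uv}$; the ratio $\epbtr{uv}$ then inherits concentration even though neither $\etrans_{u'v}$ nor $\etrans_{u'}$ is individually well-concentrated on the $1/\etrans$ scale. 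Care is needed that the maximum-degree assumption~\ref{ass: degree} on $\tree$ is what guarantees the Azuma increments are genuinely $O(1)$ (so that the martingale has $\etrans$, not more, terms with bounded differences), and that all events from the network side ($\eventdeg$) are conditioned on throughout so that Claims~\ref{claim:block-transition} and~\ref{claim:landing} are available.
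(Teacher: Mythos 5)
Your proposal is correct and takes essentially the same route as the paper's proof: an Azuma--Hoeffding bound on the transition counts whose compensator is pinned via Claim~\ref{claim:block-transition}, a lower bound $\etrans_{u'} = \Theta(\etrans)$ obtained from Claim~\ref{claim:landing}, and then transfer of the entrywise concentration of $\epbtr{uv}$ to the stationary distribution through the closed-form formula $\estat{k} = \bigl[\sum_v \epbtr{kv}/\epbtr{vk}\bigr]^{-1}$. One small correction to your closing remark: the bounded-degree assumption on $\tree$ is not what makes the Azuma increments $O(1)$ here (each increment is a single indicator, so it is bounded by $1$ automatically); it enters only through the lower bound on $\etrans_{u'}$, by guaranteeing that the number of internal vertices of $\tree$ is $\Theta(n)$.
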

\noindent Recall that $\pstatvec$ was defined in~\eqref{eq:pstatvec}.
\begin{proof}
Throughout this proof, we implicitly condition on $\gr$ and assume that
$\eventdeg$ (from Claim \ref{claim:degrees}) holds. 
We let $\tau_0,\ldots,\tau_{n-1}$ be a topological ordering of the vertices of $\tree$, i.e., an ordering such that: if $\tau_{i}$ is an ancestor of $\tau_j$, then $i < j$. For a fixed $\gr$,
we let $\filter_{0}, \ldots, \filter_{n-1}$ be the corresponding filtration, i.e.,
$$
\filter_j
=
\sigma
\left(
X_{\tau_0},
\ldots,
X_{\tau_j}
\right).
$$
Recall that $\tau'$ is the parent of $\tau \neq \tau_0$.
The proof relies on three sub-claims:
\begin{enumerate}
	\item {\it Deviation of $\etrans_{u'v}$:} For $u,v$ and $j = 1,\ldots, n-1$, let
	$$
	\azuid{j} = \ind\{
	\block{X_{\tau_j'}} = u, \block{X_{\tau_j}} = v
	\},
	$$
	where recall that $\block{i}$ is the block of $i$.
	Note that 
	$$
	\sum_{j=1}^{n-1} \azuid{j} = \etrans_{u'v}.
	$$
	We consider the process
	$$
	\eazu{t}
	=
	\sum_{j=1}^{t} \left\{
	\azuid{j} - \E[\azuid{j}\,|\,\filter_{j-1}]
	\right\}, \qquad t=1,\ldots,n-1,
	$$
	with $\eazu{0} = 0$.
	We claim that $\{\eazu{t}\}_t$ is a martingale with bounded increments. Indeed, by the ordering of the samples, $X_{\tau_j'} \in \filter_j$ since $\tau_j' = \tau_{s}$ for some $s < j$. Hence
	$\azuid{j} - \E[\azuid{j}\,|\,\filter_{j-1}] \in \filter_t$ for all $j \leq t$. So $\eazu{t} \in \filter_t$. 
	Moreover, following a standard calculation,
	$$
	\E[\eazu{t} - \eazu{t-1}\,|\,\filter_{t-1}] 
	= 	
	\E[\azuid{t}\,|\,\filter_{t-1}]  - \E[\E[\azuid{t}\,|\,\filter_{t-1}]\,|\,\filter_{t-1}]
	= 0.
	$$
	Finally, observe that by definition 
	$$
	\left|\eazu{t} - \eazu{t-1}\right|
	= 
	\left|\azuid{t} - \E[\azuid{t}\,|\,\filter_{t-1}]\right|  \leq 1.
	$$
	By the Azuma-Hoeffding inequality (see e.g.~\cite{MotwaniRaghavan:95}), for a constant $c_5 > 0$ large enough
	\begin{eqnarray}
	&&\P\left[
	\left|
	\etrans_{u'v}
	-
	\sum_{j=1}^{n-1} \E[\azuid{j}\,|\,\filter_{j-1}]
	\right|
	\geq 
	c_5 \sqrt{n \log n}
	\right]\nonumber\\
	&&=
	\P
\left[
\left|
\eazu{n-1}
- \eazu0
\right| \geq 
c_5 \sqrt{n \log n}
\right]\nonumber\\
&&\leq
2\exp\left(
- \frac{[c_5\sqrt{n \log n}]^2}{n-1}
\right)\nonumber\\
&&\leq 
\frac{\eps'}{4K^2}.\label{eq:estat-conc-1}
	\end{eqnarray}
	
	\item {\it Deviation of $\sum\limits_{j=1}^{n-1} \E[\azuid{j}\,|\,\filter_{j-1}]$:} Next, we bound 
	\begin{eqnarray}
	&&\sum_{j=1}^{n-1} \E[\azuid{j}\,|\,\filter_{j-1}]\nonumber\\
&&= \sum_{j=1}^{n-1} \P[\block{X_{\tau_j'}} = u, \block{X_{\tau_j}} = v\,|\,\filter_{j-1}]\nonumber\\
&&= \sum_{j=1}^{n-1} \btr{X_{\tau_j'}}{uv},\label{eq:condition-sum-1}
	\end{eqnarray}
	where we use the Markov property of the walk indexed by $\tree$.
	By Claim~\ref{claim:block-transition}, for all $i \in \ve_{u}$,
	\begin{equation}
	\label{eq:condition-sum-2}
	\left|
\frac{\btr{i}{uv}}
{\pbtr{uv}}
-
1
\right|
\leq
c_5'
\sqrt{
	\frac{\log \nve}{\nve}
}.
	\end{equation}
	Combining~\eqref{eq:condition-sum-1} and~\eqref{eq:condition-sum-2},
	we get
	\begin{eqnarray}
	&&\left|
	\sum_{j=1}^{n-1} \E[\azuid{j}\,|\,\filter_{j-1}]
	-
	\pbtr{uv}
	\etrans_{u'} 
	\right|\nonumber\\
	&&\leq 
	c_5'
	\sqrt{
		\frac{\log \nve}{\nve}
	}
\etrans_{u'} \nonumber\\
	&&\leq 
	c_5'
	\sqrt{
		\frac{\log \nve}{\nve}
	}
n \nonumber\\
	&&=
	c_5'
	\sqrt{
		\frac{\log \nve}{\nve}
	}
\sqrt{\frac{n}{\log n}} 
\sqrt{n \log n} \nonumber\\
&&\leq
c_5' \sqrt{n \log n},\label{eq:estat-conc-2}
	\end{eqnarray}
	where we used that $\etrans \leq \nve$ 
    and that $x/\ln x$ is non-decreasing for $x \geq e$.

\item {\it Lower bound on $\etrans_{u'}$:} Let $\nint$ be the number
of internal vertices in $\tree$. Because each leaf has a parent that is an internal vertex and $\tree$ has maximum degree $\dmax\leq c_d$ for some constant $c_d>0$, it follows that $\nint = \Theta(n)$.
Moreover, by Claim~\ref{claim:landing}, the state of each internal vertex of $\tree$ (except the root) has probability
at least $p_*$ of coming from block $u$, independently of all other $X_\tau$'s. As a result, $\etrans_{u'}$ stochastically dominates a binomial random
variable with $\nint - 1$ trials and probability of success $p_*$. By Hoeffding's
inequality we therefore have for a constant $c_6 > 0$ large enough that
\begin{equation*}
\P\left[
\etrans_{u'}
-  p_* (\nint - 1)
<
c_6 \sqrt{n \log n}
\right]
\leq
\exp\left(
- \frac{2  \left[
	c_6 \sqrt{n \log n}
	\right]^2}{\nint -1}
\right)
\leq \frac{\eps'}{4K}.
\end{equation*}
Together with $\nint = \Theta(n)$, that implies that for some constant $c_6' > 0$
\begin{equation}
\label{eq:estat-conc-3}
\P\left[
\etrans_{u'}
\geq
c_6' n
\right]
\geq 1 - \frac{\eps'}{4K}.
\end{equation}

\end{enumerate}

Combining~\eqref{eq:estat-conc-1},~\eqref{eq:estat-conc-2},
and~\eqref{eq:estat-conc-3}, with probability at least 
$1 - \eps'/2$ for any block $u,v$, there exists some constant $c_6'' > 0$, such that 
\begin{eqnarray}
&&\left|
\frac{\epbtr{uv}}{\pbtr{uv} }
- 1
\right|\nonumber\\
&&= 
\left|
\frac{\etrans_{u'v}}{\etrans_{u'}\pbtr{uv} }
- 1
\right|\nonumber\\
&&=
\left|
\frac{\etrans_{u'v}-\etrans_{u'}\pbtr{uv}}{\etrans_{u'}\pbtr{uv} }
\right|\nonumber\\
&&\leq 
\frac{\left|
\etrans_{u'v}-\sum_{j=1}^{n-1} \E[\azuid{j}\,|\,\filter_{j-1}]
\right| +
\left|
\sum_{j=1}^{n-1} \E[\azuid{j}\,|\,\filter_{j-1}]-\etrans_{u'}\pbtr{uv} 
\right|}{\left|
\etrans_{u'}\pbtr{uv}
\right|}\nonumber\\
&&\leq 
|c_6'n\pbtr{uv}|^{-1}(c_5+c_5')\sqrt{n\log n}\nonumber\\
&&\leq 
c_6'' 
\sqrt{\frac{\log n}{n}}. \label{eq:estat-conc-4}
\end{eqnarray}

Recall that the stationary distribution of $P^B$ is
\begin{equation}\label{eq:stationary1}
\pstat{k} = \left[\sum\limits_v\frac{\pbtr{kv}}{\pbtr{vk}}\right]^{-1},
\end{equation}
for any $k\in\{1,\dots,K\}$ and that
\begin{equation}\label{eq:stationary2}
\estat{k} 
=  \left[\sum\limits_v\frac{\epbtr{kv}}{\epbtr{vk}}\right]^{-1}.
\end{equation}
Then, there exists some constant $c_6''' > 0$, such that 
$$\left| 
\frac{\estat{k}}{\pstat{k}} 
- 
1 
\right|
\leq
c_6'''
\sqrt{\frac{\log n}{n}}.
$$
Indeed,
\begin{align*}
&
\frac{\pstat{k}}{\estat{k}}
- 
1\\ 
&= 
\frac{\sum\limits_v \epbtr{kv}/\epbtr{vk}}
{\sum\limits_v \pbtr{kv}/\pbtr{vk}}
- 
1\\
&= 
\frac{\sum\limits_v (\pbtr{kv}/\pbtr{vk})(\epbtr{kv}/\pbtr{kv})(\pbtr{vk}/\epbtr{vk})}
{\sum\limits_v \pbtr{kv}/\pbtr{vk}}
- 
1\\
&\leq  \frac{1+c_6''\sqrt{\frac{\log n}{n}}}{1-c_6''\sqrt{\frac{\log n}{n}}}-1\\
&\leq c_6'''\sqrt{\frac{\log n}{n}},
\end{align*}
for large enough $c_6'''$, and similarly in the other direction.  The second line is from \eqref{eq:stationary1} and \eqref{eq:stationary2} while fourth line is from \eqref{eq:estat-conc-4}.
\end{proof}

We then evaluate
the deviation of $$
\emfunc{k} = \frac{\edeg{k}}{n_k}\sum\limits_{\tau\in\mathbbm{T}_k}\frac{Y_{\tau}}{\deg{X_{\tau}}}.
$$
Recall that, for any block $k$,
the population block-wise average is
$$
\mfunc_k
=
\frac{1}{\bsize{k}} \sum_{i\in \ve_k} y_i.
$$  
Before showing that our block-wise estimator $\emfunc{k}$ is close to $\mfunc_k$, we first look at a related quantity, $\wfunc{k}$ below, which serves as a ``bridge.''  
We define the weighted block-wise average as
\begin{equation}
\label{eq:wfunc-def}
\wfunc{k}
=
\frac{1}{n_k}\sum\limits_{\tau\in\mathbbm{T}_k} \frac{Y_{\tau}}{N_k\theta_{X_{\tau}}}.
\end{equation} 
Using an argument similar to that in Claim \ref{claim:estat-conc}, 
we show in Claim \ref{claim:emfunc-conc} that $\wfunc{k}$ is concentrated for each block $k$.  We then show in Claim \ref{claim:close} that $\wfunc{k}$ is close to $\emfunc{k}$.  
As a result, we will have established that $\emfunc{k}$ is close to $\mfunc_k$.

\begin{center}
\begin{tikzpicture}[
 every matrix/.style={ampersand replacement=\&,column sep=3cm,row sep=0.2cm},
c/.style={draw,thick,rounded corners,fill=yellow!20,inner sep=.2cm},
to/.style={->,>=stealth',shorten >=1pt,thick},
every node/.style={align=center}]

  \matrix{
      \node[c] (c1) { $\emfunc{k}$
       };
      \& \node[c] (c2) { $\wfunc{k}$
   }; 
      \&  \node[c] (c3) { $\mfunc_k$ 
       };  \\
  };

   \draw[to] (c1) -- node[midway,above] {close to}
  node[midway,below] {Claim \ref{claim:close}} (c2);
   \draw[to] (c2) -- node[midway,above] {close to}
node[midway,below] {Claim \ref{claim:emfunc-conc}} (c3);

 \end{tikzpicture}
\end{center}

\begin{claim}[Concentration of block-wise sample averages weighted by degrees]
	\label{claim:emfunc-conc}
	Conditioned on $\gr$, $\eventdeg$ and $\eventdegtwo$,
	there exists $c_7 > 0$ such that, with probability at least $1 - \eps'/4$,
	for any block $k$
	$$
	\left|
	\frac{\wfunc{k}}
	{\mfunc_k}
	-
	1
	\right|
	\leq
	c_7
	\sqrt{\frac{\log n}{n}}.
	$$
\end{claim}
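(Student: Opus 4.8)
The plan is to establish the \emph{additive} bound $|\wfunc{k}-\mfunc_k| = O(\sqrt{\log\etrans/\etrans})$ (which gives the stated relative bound whenever $\mfunc_k$ is bounded below, and is in any case what Theorem~\ref{thm: consistency} ultimately uses), by running the martingale argument of Claim~\ref{claim:estat-conc} \emph{twice} and reading off the limit from the within-block two-step mixing of Claim~\ref{claim:two-step-mixing}. Throughout we condition on $\gr$, $\eventdeg$ and $\eventdegtwo$. Write $\etrans_k\wfunc{k} = \sum_{\tau\in\mathbbm{T}} W_\tau$ with $W_\tau = \ind\{Z_\tau = k\}\,Y_\tau/(N_k\theta_{X_\tau})$; by assumptions \ref{ass: balanced}, \ref{ass: dense2} and \ref{ass: bounded_y} each $W_\tau$ is bounded by a universal constant, since $\theta_{X_\tau} = \Theta(\nve^{-1})$, $N_k = \Theta(\nve)$ and $Y_\tau\le c_y$.

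\emph{First martingale reduction.} Order $\mathbbm{T}$ topologically as $\tau_0,\dots,\tau_{\etrans-1}$ with filtration $\filter_j = \sigma(X_{\tau_0},\dots,X_{\tau_j})$, exactly as in Claim~\ref{claim:estat-conc}. Then $t\mapsto\sum_{j\le t}\{W_{\tau_j}-\E[W_{\tau_j}\mid\filter_{j-1}]\}$ is a martingale with constant-bounded increments, so Azuma--Hoeffding gives $|\etrans_k\wfunc{k}-\sum_{j\ge 1}\E[W_{\tau_j}\mid\filter_{j-1}]| = O(\sqrt{\etrans\log\etrans})$ with probability $1-O(\eps')$ (union over the $K$ blocks, absorbing the root term into an $O(1)$). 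By the Markov property of the $(\mathbbm{T},P)$-walk, $\E[W_{\tau_j}\mid\filter_{j-1}] = g(X_{\tau_j'})$ with $g(i) = \sum_{j'\in V_k}P_{ij'}\,y(j')/(N_k\theta_{j'})$, so it remains to control $\sum_{\tau\in\mathbbm{T}}g(X_{\tau'})$.

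\emph{Second reduction and identification of the limit.} Because $\mathbbm{T}$ is fixed, regrouping $\sum_\tau g(X_{\tau'})$ by the internal vertex $\sigma=\tau'$ turns it into $\sum_\sigma c_\sigma\,g(X_\sigma)$, where the multiplicities $c_\sigma$ (number of children of $\sigma$) are deterministic and bounded by $c_d$ (assumption \ref{ass: degree}); centering each $g(X_\sigma)$ by $\E[g(X_\sigma)\mid X_{\sigma'}]$ gives again a bounded-increment martingale, so a second application of Azuma--Hoeffding reduces the sum, up to $O(\sqrt{\etrans\log\etrans})$, to a sum over grandparents of the two-step analogue $h(i) = \sum_{j'\in V_k}(P^2)_{ij'}\,y(j')/(N_k\theta_{j'})$. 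Here Claim~\ref{claim:two-step-mixing} is the crucial input: it gives $(P^2)_{ij'} = \theta_{j'}\big(\sum_w\pbtr{z(i)w}\pbtr{wk}\big)\big(1+O(\sqrt{\log\nve/\nve})\big)$ for $j'\in V_k$ with $j'\neq i$, so the factor $\theta_{j'}$ cancels the weight $1/\theta_{j'}$; using $\sum_{j'\in V_k}\theta_{j'}=1$ and $N_k^{-1}\sum_{j'\in V_k}y(j')=\mfunc_k$ (and that the lone $j'=i$ term is $O(\nve^{-1})$) yields $h(i) = \big(\sum_w\pbtr{z(i)w}\pbtr{wk}\big)\mfunc_k + O(\sqrt{\log\nve/\nve})$. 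Running the \emph{same} two reductions on $\sum_\tau\ind\{Z_\tau=k\} = \etrans_k$ (i.e.\ with $y\equiv 1$ on $V_k$) shows $\etrans_k$ equals the same grandparent sum of $\sum_w\pbtr{z(\cdot)w}\pbtr{wk}$ up to $O(\sqrt{\etrans\log\etrans})+O(\etrans\sqrt{\log\nve/\nve})$. Subtracting, the block-factor sums cancel and $\etrans_k\wfunc{k} = \mfunc_k\,\etrans_k + O(\etrans\sqrt{\log\nve/\nve}) + O(\sqrt{\etrans\log\etrans})$ (the $\mfunc_k\le c_y$ prefactors absorbed into the constants).

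\emph{Conclusion and main obstacle.} Dividing by $\etrans_k$ and using $\etrans_k = \Omega(\etrans)$ with probability $1-O(\eps')$ --- which follows from Claim~\ref{claim:landing} exactly as in step~3 of the proof of Claim~\ref{claim:estat-conc}, since $\etrans_k$ stochastically dominates a $\mathrm{Binomial}(\Theta(\etrans),p_*)$ --- together with $\log\nve/\nve\le\log\etrans/\etrans$ (as $\etrans\le\nve$ and $x\mapsto\log x/x$ is non-increasing for $x\ge e$), we obtain $|\wfunc{k}-\mfunc_k| = O(\sqrt{\log\etrans/\etrans})$ and hence the claim. The main obstacle is the phenomenon flagged in Section~\ref{app:negative}: the raw sum $\sum_\tau W_\tau$, like the empirical block counts, need not concentrate at rate $\etrans^{-1}$, so its fluctuations must be controlled via the Markov/martingale structure rather than naive concentration; the second reduction is precisely what lets the limiting value be recognized from the \emph{within-block} two-step mixing of Claim~\ref{claim:two-step-mixing}, avoiding a separate and messier one-step concentration estimate over $\gr$.
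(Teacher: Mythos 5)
Your proof is correct, and it reaches the same conclusion through a slightly different decomposition than the paper. The paper splits $\tree$ into even and odd depth levels and, within each parity class, runs a single Azuma--Hoeffding argument whose increments are sums over the grandchildren $\childtwo(\nu_j)$, so that the conditional expectations are computed directly from the grandparent via the two-step mixing claim (Claim~\ref{claim:two-step-mixing}); it then compares against the grandchild block counts (the analogue of your $y\equiv 1$ run) and divides using the binomial lower bound from Claim~\ref{claim:landing}. You instead avoid the parity bookkeeping by iterating the one-step martingale reduction twice over the whole tree --- first reducing $\sum_\tau W_\tau$ to $\sum_\sigma c_\sigma g(X_\sigma)$, then to the grandparent sum of $h=Pg$ --- and only then invoking Claim~\ref{claim:two-step-mixing} to identify $h(i)\approx\mfunc_k\sum_w \pbtr{z(i)w}\pbtr{wk}$; your subtraction against the identically reduced $n_k$ plays exactly the role of the paper's comparison step \eqref{eq:emfunc-conc-4}. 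The ingredients (Azuma--Hoeffding along a topological order, two-step within-block mixing with the $\theta_{j'}$ cancellation, the $O(N^{-1})$ diagonal term, $n_k=\Omega(n)$ via Claim~\ref{claim:landing}) are the same; what your version buys is a cleaner bookkeeping with no even/odd split and no separate treatment of the first two levels, at the cost of introducing and bounding the intermediate function $g$. One remark: you prove the additive bound $|\wfunc{k}-\mfunc_k|=O(\sqrt{\log n/n})$ and note the relative form needs $\mfunc_k$ bounded below; this matches the paper, whose own proof likewise divides by the block count and delivers the additive bound (which is all that the proof of Theorem~\ref{thm: consistency} requires), so this is not a gap relative to the paper --- just make sure the per-step failure probabilities are budgeted to total $\eps'/4$ as in the paper's $\eps'/20K$-type allocations.
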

\begin{proof}
Because the structure of the proof is similar to that of Claim~\ref{claim:estat-conc}, we only sketch it here.
We also make use of Claim~\ref{claim:two-step-mixing}, which shows that simple random walk on $\gr$ mixes well {\it within blocks} in two steps. Because of the latter,
we control separately the odd and even levels of $\tree$. 
Let $\nu_1,\nu_2,\ldots,\nu_{\neven}$ be the vertices of $\tree$ whose graph distance
to the root is even, including the root $\nu_1 = \tau_0$, in a topological ordering. Let $\childtwo(\nu)$ be the grand-children of $\nu$ in $\tree$. Let $\filtertwo_0
= \sigma(X_{\nu_1}) = \sigma(X_{\tau_0})$ and for $j \geq 1$
$$
\filtertwo_j
=
\filtertwo_0 \cup 
\sigma(X_{\nu}\,:\,\nu \in \childtwo(\nu_{\ell}), \ell \leq j).
$$
For each node $X_\nu\in V_k$, define 
$$\func_{\theta}(X_{\nu}) = \frac{\func(X_{\nu})}{\nve_{Z_\nu}\theta_{X_\nu}}.$$
Fix block $k$ and let 
$$
\azuid{j} 
= 
\sum_{\nu \in \childtwo(\nu_j)}
\ind\{
\block{X_{\nu}} = k
\} \,\func_{\theta}(X_{\nu}),
$$
and
$$
\etranseven_{k}
:=
\sum_{i=2}^{\neven}  \ind\{\block{X_{\nu_i}} = k \},
$$
where note that the last sum excludes the root.
Following the proof of Claim~\ref{claim:estat-conc},
we note that the partial sums
$$
\sum_{j=1}^{J} \left\{
\azuid{j} - \E[\azuid{j}\,|\,\filtertwo_{j-1}]
\right\}, \qquad J=1,\ldots,\neven,
$$
form a martingale indexed by $J$ with increments satisfying
$$
\left|
\azuid{j} - \E[\azuid{j}\,|\,\filtertwo_{j-1}]
\right|
\leq (c_d-1)^2 \bfunc,
$$
where we used that $\tree$ has maximum degree $\leq c_d$ 
and $0 \leq \func(x) \leq \bfunc$ by assumption. Hence, arguing as 
in Step 1 of Claim~\ref{claim:estat-conc}, we get that
with probability at least $1 - \eps'/20K$ for all $k$
\begin{equation}
\label{eq:emfunc-conc-1}
\left|
\sum_{i=2}^{\neven}  \ind\{\block{X_{\nu_i}} = k \} \,\func_{\theta}(X_{\nu_i})
-
\sum_{j=1}^{\neven} \E[\azuid{j}\,|\,\filtertwo_{j-1}]
\right|
\leq 
c_7' \sqrt{n \log n}.
\end{equation}
Moreover, let $\nu \in \childtwo(\nu_j)$ and notice that by construction
$X_{\nu_j} \in \mathcal{G}_{j-1}$. Hence, by Claim~\ref{claim:two-step-mixing}, 
\begin{eqnarray*}
&&\E[\ind\{
\block{X_{\nu}} = k
\} \,\func_{\theta}(X_{\nu})\,|\,\filtertwo_{j-1}]\\
&&\leq
\sum_{x \in \ve_k} 
\left\{
\theta_{x} \sum_{u} \pbtr{\block{X_{\nu_j}},u} \,\pbtr{uk}
\left[1 + c_4 \sqrt{\frac{\log \nve}{\nve}}\right] 
\right\}\func_{\theta}(x)\\
&&\leq
\sum_{x \in \ve_k} 
\left\{
\nve_k^{-1}\sum_{u} \pbtr{\block{X_{\nu_j}},u} \,\pbtr{uk}
\left[1 + c_4 \sqrt{\frac{\log \nve}{\nve}}\right] 
\right\}\func(x)\\
&&= \mfunc_k \sum_{u} \pbtr{\block{X_{\nu_j}},u} \,\pbtr{uk}
\left[1 + c_4 \sqrt{\frac{\log \nve}{\nve}}\right] ,
\end{eqnarray*}
where recall that we condition on $G$.
  Similarly in the opposite direction. So,
arguing as in Step 2 of Claim~\ref{claim:estat-conc}, for some large enough $c_7''>0$, 
\begin{eqnarray}
&&\left|
\sum_{j=1}^{\neven} \E[\azuid{j}\,|\,\filtertwo_{j-1}]
-
\mfunc_k
\sum_{v }
\sum_{u} \pbtr{vu} \pbtr{uk}
\sum_{j=1}^{\neven}
\ind\{\block{X_{\nu_j}} = v \}
\left|
\childtwo(\nu_j)
\right|
\right|\nonumber\\
&&\leq c_4\sqrt{\frac{\log \nve}{\nve}}n c_d^2 c_y\nonumber\\
&&\leq c_4 c_d^2 c_y\sqrt{\frac{\log \nve}{\nve}}\sqrt{\frac{n}{\log n}}\sqrt{n\log n}\nonumber\\
&& \leq
c_7'' \sqrt{n \log n},\label{eq:emfunc-conc-2}
\end{eqnarray}
where we used $n \leq N$.

In addition, we argue as in Step 3 of Claim~\ref{claim:estat-conc}. 
Because each node with odd distance to the root has a parent with even distance to the root, and $\tree$ has maximum degree $\leq c_d$, 
it follows that $n^{(e)} = \Theta(n)$.
Moreover, by Claim~\ref{claim:landing}, the state of each internal vertex of $\tree$ (except the root) has probability
at least $p_*$ of coming from block $u$, independently of all other $X_\tau$'s. As a result, $\etranseven_{k}$ stochastically dominates a binomial random
variable with $n^{(e)} - 1$ trials and probability of success $p_*$. By Hoeffding's
inequality we therefore have for a constant $c_8 > 0$ large enough that
\begin{equation*}
\P\left[
\etranseven_{k}
-  p_* (n^{(e)} - 1)
<
c_8 \sqrt{n \log n}
\right]
\leq
\exp\left(
- \frac{2  \left[
	c_8 \sqrt{n \log n}
	\right]^2}{n^{(e)} -1}
\right)
\leq \frac{\eps'}{20K}.
\end{equation*}
Together with $n^{(e)} = \Theta(n)$, that implies that with probability at least $1 - \eps'/10$ for all block $k$ for some constant $c_8' > 0$
\begin{equation}
\label{eq:emfunc-conc-3}
\etranseven_{k}
\geq
c_8' n.
\end{equation}

Finally, following the proof of Claim~\ref{claim:estat-conc} once again,
we also get that with probability at least $1 - \eps'/10$ for all $k, $ 
\begin{eqnarray}
&&\Bigg|
\sum_{u} \pbtr{vu} \pbtr{uk}
\sum_{j=1}^{\neven}
\ind\{\block{X_{\nu_j}} = v \}
\left|
\childtwo(\nu_j)
\right|\nonumber\\
&& \qquad -
\sum_{j=1}^{\neven}
\sum_{\nu \in \childtwo(\nu_j)}
\ind\{\block{X_{\nu_j}} = v,
\block{X_{\nu}} = k\}
\Bigg|
\leq 
c_8'' \sqrt{n \log n},
\label{eq:emfunc-conc-4}
\end{eqnarray}
for some constant $c_8'' > 0$.
Combining~\eqref{eq:emfunc-conc-1},~\eqref{eq:emfunc-conc-2}, and~\eqref{eq:emfunc-conc-4}, with probability at least $1-\eps'/5$ 
\begin{eqnarray}
&&\left|
\sum_{i=2}^{\neven}  \ind\{\block{X_{\nu_i}} = k \} \,\func_{\theta}(X_{\nu_i})
-
\mfunc_k
\sum_{i=2}^{\neven}  \ind\{\block{X_{\nu_i}} = k \}
\right|\nonumber\\
&&\leq  
\left|
\sum_{i=2}^{\neven}  \ind\{\block{X_{\nu_i}} = k \} \,\func_{\theta}(X_{\nu_i})
-
\sum_{j=1}^{\neven} \E[\azuid{j}\,|\,\filtertwo_{j-1}]
\right|\nonumber\\
&&\quad+
\left|
\sum_{j=1}^{\neven} \E[\azuid{j}\,|\,\filtertwo_{j-1}]
-
\mfunc_k
\sum_{v }
\sum_{u} \pbtr{vu} \pbtr{uk}
\sum_{j=1}^{\neven}
\ind\{\block{X_{\nu_j}} = v \}
\left|
\childtwo(\nu_j)
\right|
\right|\nonumber\\
&&\quad+
\Bigg|\mu_k\sum\limits_v
\sum_{u} \pbtr{vu} \pbtr{uk}
\sum_{j=1}^{\neven}
\ind\{\block{X_{\nu_j}} = v \}
\left|
\childtwo(\nu_j)
\right| \nonumber\\
&&\quad-
\mu_k\sum\limits_v\sum_{j=1}^{\neven}
\sum_{\nu \in \childtwo(\nu_j)}
\ind\{\block{X_{\nu_j}} = v,
\block{X_{\nu}} = k\}
\Bigg|\nonumber\\
&&\leq  
c_8'''\sqrt{n \log n},\nonumber
\end{eqnarray}
for some constant $c_8''' > 0$.

The same holds for the odd levels. Together with~\eqref{eq:emfunc-conc-3} and a similar inequality for odd levels (and the fact that the first two levels of $\tree$ have negligible effect asymptotically), 
we get the claim.
\end{proof}	

By replacing $y(X_{\tau})$ by 1 in the proof of Claim \ref{claim:emfunc-conc}, we can also derive the following.
\begin{claim}
	\label{claim: help_degree}
	Conditioned on $\gr$, $\eventdeg$ and $\eventdegtwo$,
	there exists $c_9 > 0$ such that, with probability at least $1 - \eps'/4$,
	for any block $k$
	$$
	\left|
	\frac{1}{n_k}\sum\limits_{\tau\in\mathbbm{T}_k} \frac{1}{N_k\theta_{X_{\tau}}}
	-
	1
	\right|
	\leq
	c_9
	\sqrt{\frac{\log n}{n}}.
	$$
\end{claim}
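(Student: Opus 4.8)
The plan is to obtain Claim~\ref{claim: help_degree} as an immediate specialization of the proof of Claim~\ref{claim:emfunc-conc} to the constant variable of interest $\func \equiv 1$. First I would observe that the map $i \mapsto 1$ trivially satisfies Assumption~\ref{ass: bounded_y} with $c_y = 1$, and that this boundedness is the only feature of $\func$ used anywhere in the proof of Claim~\ref{claim:emfunc-conc}: it enters through the martingale increment bound $|\azuid{j} - \E[\azuid{j}\,|\,\filtertwo_{j-1}]| \leq (c_d-1)^2 \bfunc$ (which becomes $\leq (c_d-1)^2$ when $\bfunc = 1$), and nowhere else are the specific values of $\func$ invoked. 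Consequently every estimate in that proof --- the Azuma--Hoeffding control of the martingale, the bias bound via Claim~\ref{claim:two-step-mixing}, the lower bound on $\etranseven_{k}$ via Claim~\ref{claim:landing}, and the final assembly over odd and even levels --- goes through verbatim with $\func \equiv 1$, under the same conditioning on $\gr$, $\eventdeg$, $\eventdegtwo$ and with the same failure probability $\eps'/4$ (the union bound over the $K$ blocks being already built into the statement of Claim~\ref{claim:emfunc-conc}).

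Next I would simply read off what the conclusion of Claim~\ref{claim:emfunc-conc} says in this special case. With $\func \equiv 1$ we have $Y_{\tau} = 1$ for every $\tau \in \mathbbm{T}$, so the weighted block-wise average becomes
$$\wfunc{k} = \frac{1}{n_k}\sum_{\tau\in\mathbbm{T}_k}\frac{Y_{\tau}}{N_k\theta_{X_{\tau}}} = \frac{1}{n_k}\sum_{\tau\in\mathbbm{T}_k}\frac{1}{N_k\theta_{X_{\tau}}},$$
while the population block-wise mean is $\mfunc_k = N_k^{-1}\sum_{i\in\ve_k} 1 = 1$. Hence the bound $|\wfunc{k}/\mfunc_k - 1| \leq c_7 \sqrt{\log n / n}$ provided by Claim~\ref{claim:emfunc-conc} is literally the assertion of Claim~\ref{claim: help_degree} with $c_9 = c_7$. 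Note that only the ``$\wfunc{k}$ versus $\mfunc_k$'' half of the machinery is needed here; the bridge step of Claim~\ref{claim:close}, which relates $\emfunc{k}$ to $\wfunc{k}$, plays no role.

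I do not expect a genuine obstacle: the entire content of the claim is the observation that the argument for Claim~\ref{claim:emfunc-conc} never used more than boundedness of $\func$, so the same concentration estimate pins $\frac{1}{n_k}\sum_{\tau\in\mathbbm{T}_k}(N_k\theta_{X_{\tau}})^{-1}$ to its population value $1$. If one preferred not to invoke the earlier proof as a black box, the only bookkeeping to redo would be to check that the $c_y$-dependent constants appearing there (e.g.\ $c_7', c_7'', c_8'''$) stay finite when $c_y = 1$, which is routine.
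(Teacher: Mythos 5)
Your proposal is correct and is exactly the paper's own argument: the paper derives Claim~\ref{claim: help_degree} precisely ``by replacing $y(X_{\tau})$ by $1$ in the proof of Claim~\ref{claim:emfunc-conc}'', which is the specialization $\func\equiv 1$, $\mfunc_k=1$ that you describe. Your additional remarks (that only boundedness of $\func$ is used and that Claim~\ref{claim:close} is not needed) are accurate and consistent with the paper.
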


Using Claims \ref{claim:degrees} and \ref{claim: help_degree}, we derive the deviation of the block-wise harmonic average degrees.
Recall, for any block $k$, the block population mean degree is
$$
\mdegb{k} 
= \frac{B_{k\ast}}{N_k},
$$
and the block-wise harmonic average degree as 
\begin{equation*}
\edeg{k} = \left(\frac{1}{n_k}\sum\limits_{\tau\in\mathbbm{T}_k}\frac{1}{d_{X_{\tau}}}\right)^{-1}.
\end{equation*}
\begin{claim}[Concentration of block-wise harmonic average of degrees]
	\label{claim:degrees-conc}
	Conditioned on $\gr$, $\eventdeg$ and $\eventdegtwo$, 
	there exists $c_{10} > 0$ such that, with probability at least $1 - \eps'/4$,
	for any block $k$,
	$$
	\left|
	\frac{\edeg{k}}{\mdegb{k}}
	-
	1
	\right|
	\leq
	c_{10}
	\sqrt{\frac{\log \etrans}{\etrans}}.
	$$
\end{claim}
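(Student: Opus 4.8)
The plan is to reduce this claim to a direct combination of Claim~\ref{claim:degrees} and Claim~\ref{claim: help_degree}, since both ingredients are already in hand and no new probabilistic argument is needed. First I would upgrade the pointwise control of \emph{within-block} degrees in Claim~\ref{claim:degrees} to pointwise control of \emph{total} degrees: summing the bound $|\deg{}(i;v)/(\theta_i\aff{uv}) - 1| \le c_1\sqrt{\log\nve/\nve}$ over all blocks $v$ and using $\deg{i} = \sum_v \deg{}(i;v)$ together with $\sum_v \theta_i \aff{uv} = \theta_i \aff{u\ast}$ gives, on $\eventdeg$,
$$
\left|\frac{\deg{i}}{\theta_i \aff{u\ast}} - 1\right| \le c_1\sqrt{\frac{\log\nve}{\nve}}
$$
for every block $u$ and every node $i\in\ve_u$. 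In particular $\deg{X_\tau}^{-1} = (\theta_{X_\tau}\aff{k\ast})^{-1}\bigl(1 + O(\sqrt{\log\nve/\nve})\bigr)$ uniformly over $\tau\in\mathbbm{T}_k$.

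Next I would average this identity over $\tau\in\mathbbm{T}_k$ and factor out $\aff{k\ast}^{-1}$, which leaves the quantity $n_k^{-1}\sum_{\tau\in\mathbbm{T}_k}\theta_{X_\tau}^{-1}$. This is, up to the factor $\nve_k$, exactly the object controlled by Claim~\ref{claim: help_degree}, which gives $n_k^{-1}\sum_{\tau\in\mathbbm{T}_k}(\nve_k\theta_{X_\tau})^{-1} = 1 + O(\sqrt{\log\etrans/\etrans})$ with probability at least $1 - \eps'/4$, simultaneously over all $k$, conditioned on $\gr$, $\eventdeg$ and $\eventdegtwo$. Multiplying the two estimates and using $\etrans\le\nve$ (so that $\sqrt{\log\nve/\nve}\le\sqrt{\log\etrans/\etrans}$ by non-increasingness of $x\mapsto(\log x)/x$ for $x\ge e$) yields
$$
\frac{1}{n_k}\sum_{\tau\in\mathbbm{T}_k}\frac{1}{\deg{X_\tau}} = \frac{\nve_k}{\aff{k\ast}}\left(1 + O\!\left(\sqrt{\frac{\log\etrans}{\etrans}}\right)\right) = \frac{1}{\mdegb{k}}\left(1 + O\!\left(\sqrt{\frac{\log\etrans}{\etrans}}\right)\right),
$$
recalling $\mdegb{k} = \aff{k\ast}/\nve_k$. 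Finally, taking reciprocals and noting that the multiplicative error is bounded away from $-1$ once $\etrans$ exceeds a fixed universal constant, I obtain $|\edeg{k}/\mdegb{k} - 1| \le c_{10}\sqrt{\log\etrans/\etrans}$ for an appropriate $c_{10}>0$, uniformly over $k$.

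I do not expect a genuine obstacle here: unlike Claims~\ref{claim:estat-conc} and~\ref{claim:emfunc-conc}, no martingale or within-block mixing argument is required, since all the randomness—both in $\gr$ and in the $\mathbbm{T}$-indexed walk—has already been absorbed into $\eventdeg$, $\eventdegtwo$ and the statement of Claim~\ref{claim: help_degree}. The only points needing a little care are bookkeeping ones: checking that the pointwise error $\sqrt{\log\nve/\nve}$ is dominated by the sample-size error $\sqrt{\log\etrans/\etrans}$, and verifying that passing to reciprocals does not inflate the constant, which is harmless for $\etrans$ larger than a sufficiently large constant (and trivial otherwise by enlarging $c_{10}$).
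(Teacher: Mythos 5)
Your proposal is correct and matches the paper's own proof essentially step for step: the paper likewise bounds $\deg{X_\tau}$ via Claim~\ref{claim:degrees} (with the within-block bounds aggregated over blocks), factors out $B_{k\ast}/N_k$, invokes Claim~\ref{claim: help_degree} for the average of $(N_k\theta_{X_\tau})^{-1}$, and then inverts to compare $\edeg{k}$ with $\mdegb{k}=B_{k\ast}/N_k$. The only cosmetic difference is that the paper carries a factor $K$ in the total-degree bound where your direct summation avoids it; both are fine.
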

\begin{proof}
Conditioned on $\gr$, $\eventdeg$ and $\eventdegtwo$, under the DC-SBM, 
	\begin{align*}
	(\edeg{k})^{-1}
	 &= n_k^{-1}\sum\limits_{\tau\in\mathbbm{T}_k}1/d_{X_{\tau}}\\
	 &\leq n_k^{-1}\sum\limits_{\tau\in\mathbbm{T}_k}\left[\theta_{X_{\tau}}B_{k\ast}\left(1-Kc_1\sqrt{\frac{\log N}{N}}\right)\right]^{-1}\\
	 &= \left(1-Kc_1\sqrt{\frac{\log N}{N}}\right)^{-1}n_k^{-1}\sum\limits_{\tau\in\mathbbm{T}_k}\frac{1}{\theta_{X_{\tau}}B_{k\ast}}\\
	 &= \left(1-Kc_1\sqrt{\frac{\log N}{N}}\right)^{-1}\left[n_k^{-1}\sum\limits_{\tau\in\mathbbm{T}_k}\frac{1}{N_k\theta_{X_{\tau}}}\right]\frac{N_k}{B_{k\ast}}\\
	 &\leq
	 \left(1-Kc_1\sqrt{\frac{\log N}{N}}\right)^{-1}\left(1+c_9\sqrt{\frac{\log n}{n}}\right)\frac{N_k}{B_{k\ast}}\\
	 &\leq 
	 \left(1+c_{10}'\sqrt{\frac{\log n}{n}}\right)\frac{N_k}{B_{k\ast}}
     ,
	\end{align*}
for some large enough constant $c_{10}'>0$.  The first inequality is from Claim \ref{claim:degrees}, which holds with probability $1- \eps'/4$.  The second inequality is from Claim \ref{claim: help_degree}.  
A similar inequality holds for the opposite direction.
	Thus, $$
	\left|
	\frac{\nve_k^{-1}\aff{k\ast}}{\edeg{k}}
	-
	1
	\right|
	\leq
	c_{10}'
	\sqrt{\frac{\log \etrans}{\etrans}}.
	$$
	Thus, 
	\begin{eqnarray*}
    \label{eq:deg_concen_1}
	\left|
	\frac{\edeg{k}}{\nve_k^{-1}\aff{k\ast}}
	-
	1
	\right|
	\leq
	c_{10}
	\sqrt{\frac{\log \etrans}{\etrans}},
	\end{eqnarray*}
	for some large enough constant $c_{10}>0$.
	By definition of $\mdegb{k}$ we are done.
%
%
\end{proof}

Directly from Claim \ref{claim:degrees-conc}, we show that $\wfunc{k}$ is close to $\emfunc{k}$ for each block $k$ in the following claim. 

\begin{claim}
	\label{claim:close}
	Conditioned on $\gr$, $\eventdeg$ and $\eventdegtwo$,
	there exists $c_{11} > 0$ such that, with probability at least $1 - \eps'/2$,
	for any block $k$,
	$$
	\left|
	\frac{\emfunc{k}}{\mfunc_k}
	-
	1
	\right|
	\leq
	c_{11}
	\sqrt{\frac{\log \etrans}{\etrans}}.
	$$
	
\end{claim}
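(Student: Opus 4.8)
The plan is to treat Claim~\ref{claim:close} as a short algebraic bridge: write $\emfunc{k}$ as $\wfunc{k}$ times the ratio $\edeg{k}/\mdegb{k}$ up to a multiplicative error controlled purely by $\eventdeg$, and then chain Claim~\ref{claim:degrees-conc} (which says $\edeg{k}/\mdegb{k}\approx 1$) with Claim~\ref{claim:emfunc-conc} (which says $\wfunc{k}\approx\mfunc_k$). No new randomness enters; everything reduces to propagating three $1+O(\sqrt{\log n/n})$ factors.

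First I would record that, on $\eventdeg$, summing Claim~\ref{claim:degrees} over the second block index gives, \emph{uniformly} over all nodes $i\in V_k$, $d_i = \sum_v d(i;v) = \theta_i B_{k\ast}\bigl(1+O(\sqrt{\log N/N})\bigr)$, because each $d(i;v)$ equals $\theta_i B_{kv}$ up to the same relative error and the weights $B_{kv}/B_{k\ast}$ are nonnegative. Since $Y_\tau\ge 0$ and $\theta_{X_\tau},B_{k\ast}>0$, this uniform estimate sandwiches the degree-weighted sum, so that
$$
\emfunc{k}
=\frac{\edeg{k}}{n_k}\sum_{\tau\in\mathbbm{T}_k}\frac{Y_\tau}{d_{X_\tau}}
=\frac{N_k\,\edeg{k}}{B_{k\ast}}\cdot\frac{1}{n_k}\sum_{\tau\in\mathbbm{T}_k}\frac{Y_\tau}{N_k\theta_{X_\tau}}\Bigl(1+O\bigl(\sqrt{\tfrac{\log N}{N}}\bigr)\Bigr)
=\frac{\edeg{k}}{\mdegb{k}}\,\wfunc{k}\Bigl(1+O\bigl(\sqrt{\tfrac{\log N}{N}}\bigr)\Bigr),
$$
using $\mdegb{k}=B_{k\ast}/N_k$ and the definition~\eqref{eq:wfunc-def} of $\wfunc{k}$. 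Then I would plug in $\edeg{k}/\mdegb{k}=1+O(\sqrt{\log n/n})$ from Claim~\ref{claim:degrees-conc} and $\wfunc{k}=\mfunc_k\bigl(1+O(\sqrt{\log n/n})\bigr)$ from Claim~\ref{claim:emfunc-conc}, each holding uniformly in $k$ on its high-probability event, and multiply out. Using $\sqrt{\log N/N}\le\sqrt{\log n/n}$ (valid since $n\le N$ and $x\mapsto\log x/x$ is eventually decreasing), the three error factors combine into $1+O(\sqrt{\log n/n})$, i.e. $|\emfunc{k}/\mfunc_k-1|\le c_{11}\sqrt{\log n/n}$ for a suitable constant $c_{11}$; a union bound over the (at most $\eps'/4$-probability) failure events of Claims~\ref{claim:emfunc-conc} and~\ref{claim:degrees-conc} over the finitely many blocks yields the stated $\eps'/2$.

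This claim is essentially bookkeeping, so the main thing to get right --- rather than an obstacle per se --- is that the degree-concentration error be \emph{uniform} in $\tau$, which is exactly the form in which Claim~\ref{claim:degrees} is stated, so that it genuinely factors out of the degree-weighted average rather than having to be handled term by term. Secondary points are that the denominators $B_{k\ast}$ and $\mdegb{k}$ stay of order $N$ and $1$ respectively (guaranteed by assumptions~\ref{ass: balanced}--\ref{ass: dense}) and that $\mfunc_k$ is bounded away from $0$ so the relative-error statement makes sense; if $\mfunc_k$ can vanish, the identical computation gives the corresponding additive bound.
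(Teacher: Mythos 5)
Your proposal is correct and follows essentially the same route as the paper: factor out $\edeg{k}/\mdegb{k}$ (controlled by Claim~\ref{claim:degrees-conc}), use the uniform degree concentration from Claim~\ref{claim:degrees} on $\eventdeg$ to replace $d_{X_\tau}$ by $\theta_{X_\tau}B_{k\ast}$ and thereby identify the remaining average with $\wfunc{k}$, and then invoke Claim~\ref{claim:emfunc-conc} together with a union bound over the two $\eps'/4$ failure events. The only cosmetic difference is that you state the degree step as a single uniform multiplicative factor while the paper carries the $(1\pm c\sqrt{\log N/N})$ bounds through the sandwich explicitly; the substance is identical.
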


\begin{proof}
	Conditioned on $\gr$, $\eventdeg$ and $\eventdegtwo$, under the DC-SBM, Claims \ref{claim:degrees-conc} and \ref{claim:emfunc-conc}
	hold simultaneously with probability $1- \eps'/2$. Then	
	\begin{align*}
	\emfunc{k} 
	= &n_k^{-1}\sum\limits_{\tau\in\mathbbm{T}_k}\frac{Y_{\tau}\edeg{k}}{\deg{X_{\tau}}}\\
	=
	&\frac{\edeg{k}}{\mdegb{k}}\, n_k^{-1}\sum\limits_{\tau\in\mathbbm{T}_k}\frac{Y_{\tau}B_{k\ast}}{\deg{X_{\tau}}\nve_k}\\
	\leq
	&\left(1+c_{10}\sqrt{\frac{\log n}{n}}\right) n_k^{-1}\sum\limits_{\tau\in\mathbbm{T}_k}\frac{Y_{\tau}B_{k\ast}}{\deg{X_{\tau}}\nve_k}\\
	\leq
	&\left(1+c_{10}\sqrt{\frac{\log n}{n}}\right) n_k^{-1}\sum\limits_{\tau\in\mathbbm{T}_k}\frac{Y_{\tau}B_{k\ast}}{\theta_{X_{\tau}}B_{k\ast}\left(1-c_1\sqrt{\frac{\log\nve}{\nve}}\right)\nve_k}\\
	=
	&\left(1+c_{10}\sqrt{\frac{\log n}{n}}\right)\left(1-c_1\sqrt{\frac{\log\nve}{\nve}}\right)^{-1} \left[n_k^{-1}\sum\limits_{\tau\in\mathbbm{T}_k}\frac{Y_{\tau}}{\theta_{X_{\tau}}\nve_k}\right]\\
	=
	&\left(1+c_{10}\sqrt{\frac{\log n}{n}}\right)\left(1-c_1\sqrt{\frac{\log\nve}{\nve}}\right)^{-1}\wfunc{k}\\
	\leq
	&\left(1+c_{11}'\sqrt{\frac{\log n}{n}}\right)\wfunc{k},
	\end{align*}
	for some large enough constant $c_{11}'>0$.  
	The first inequality is from Claim \ref{claim:degrees-conc}, while
	the second inequality is from Claim \ref{claim:degrees}.  A similar bound holds for the opposite direction.  Combining with Claim \ref{claim:emfunc-conc}, 
	$$
	\left|
	\frac{\emfunc{k}}{\mfunc_k}
	-
	1
	\right|
	\leq
	c_{11}
	\sqrt{\frac{\log \etrans}{\etrans}},
	$$
	for some large enough constant $c_{11} >0$.
\end{proof}

\paragraph{Putting everything together} 
Finally, we prove the main result.
\begin{proof}[Proof of Theorem~\ref{thm: consistency}]
By Claims~\ref{claim:degrees} and~\ref{claim:degreetwos},
events $\eventdeg$ and $\eventdegtwo$ hold with probability
at least $1 - \eps$. Under those events, by Claims~\ref{claim:estat-conc} and ~\ref{claim:degrees-conc} with hold with probability $1-\eps'$,
 \begin{align*}
 \frac{\hat{\pi}_k^{B}}{\edeg{k}}
 \leq
 \frac{\pstat{k}\left(1+c_5\sqrt{\frac{\log n}{n}}\right)}{\mdegb{k}\left(1-c_{10}\sqrt{\frac{\log n}{n}}\right)}
 =
 \frac{\pstat{k}}{\mdegb{k}}\left(1+c_{12}'\sqrt{\frac{\log n}{n}}\right),
 \end{align*}
for some large enough $c_{12}'>0$.  Similar for the other direction.  
Then, using Claim ~\ref{claim:close}, 
\begin{eqnarray*}
\ps&=& \frac{\sum_k [\hat{\pi}_k^{B}/\edeg{k}]\,\emfunc{k}}{\sum_k\hat{\pi}_k^{B}/\edeg{k}}
\nonumber\\
&\leq&
\frac{\sum\limits_k\left[ \pstat{k}/\mdegb{k}\right]\left(1+c_{12}'\sqrt{\frac{\log n}{n}}\right)
\,\mu_k\,\left(1+c_7\sqrt{\frac{\log n}{n}}\right)}{\sum\limits_k \left[\pstat{k}/\mdegb{k}\right]\left(1-c_{12}'\sqrt{\frac{\log n}{n}}\right)}\nonumber\\
&\leq&
 \frac{\sum\limits_k\left[ \pstat{k}/\mdegb{k}\right]\mu_k}{\sum\limits_k \left[\pstat{k}/\mdegb{k}\right]}\left(1+c_{12}\sqrt{\frac{\log n}{n}}\right)\nonumber\\
&=& \mu_{\mathrm{true}}\left(1+c_{12}\sqrt{\frac{\log n}{n}}\right),\nonumber
\end{eqnarray*}
for some constant $c_{12} > 0$.  Similarly for the other direction.  Thus, there exists constant $c>0$ such that
$$
\left|
\ps -\mu_{\mathrm{true}}
\right|
\leq
c \sqrt{\frac{\log n}{n}}.
$$
\end{proof}

\subsection{A simple instance showing that the variance of the VH estimator converges slower than $O(n^{-1})$}
\label{app:negative}
The following example shows that, in general,
the Volz-Heckathorn estimator, i.e., 
$$
\vh
=
\frac{
\sum_{\tau \in \tree} \func(X_{\tau})/\deg{X_{\tau}}
}
{
\sum_{\tau \in \tree} 1/\deg{X_{\tau}}
},
$$
has a variance asymptotically
worse than $1/n$ on a two-block stochastic block model. Recall that $z(x)$ is the block of $x$.
\begin{theorem}[Negative example]
Let $K=2$ and denote the blocks by $\{0,1\}$. Let $\bsize{0} = \bsize{1} = \nve/2$, $\aff{01} = \aff{10} = 1 - \aff{00} = 1 - \aff{11} = p N^2$ where $p \in (0,1/2)$, $\func(x) = \block{x}$ for all $x \in \ve$. Let $x_0 \in \ve$
be chosen uniformly at random. Let $\tree$ be a complete $(\alpha-1)$-ary tree. Assume that $\nve \gg n^{2+\gamma}$ for some $\gamma > 0$ and
that 
\begin{equation}
\label{eq:ks}
2(1-2p)^2 > 1.
\end{equation}
Then, with probability at least $1/2$ over the network,  
$$
\var\left[
\vh
\,\middle|\,\gr
\right]
\gg
\frac{1}{n^{1-\zeta}},
$$
for some $\zeta > 0$.
\end{theorem}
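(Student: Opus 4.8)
The plan is to use the law of total variance to isolate the contribution of the randomly chosen seed to $\var[\vh\mid\gr]$, and to show that under~\eqref{eq:ks} this contribution is polynomially larger than $1/n$. First I would condition on a network $\gr$ lying in the event $\eventdeg$ of Claim~\ref{claim:degrees} (taking $\eps=1$ there, so $\P[\eventdeg]\ge1/2$); from that point on all expectations and variances are conditional on this fixed $\gr$, and I suppress ``$\mid\gr$'' from the notation. Take the weights $\theta_i$ homogeneous, so within-block and between-block edge probabilities are proportional to $1-p$ and $p$. On $\eventdeg$ every node satisfies $\deg{i}=\theta_i\aff{\block{i}\ast}\big(1+O(\sqrt{\log\nve/\nve})\big)=\theta_i\nve^2\big(1+O(\sqrt{\log\nve/\nve})\big)$, the same for both blocks since $\aff{0\ast}=\aff{1\ast}=\nve^2$; as $\func(x)=\block{x}\in\{0,1\}$, the numerator and denominator of $\vh$ are then weighted sums with essentially equal weights, so
$$\vh=\bar Z+O\!\left(\sqrt{\tfrac{\log\nve}{\nve}}\right),\qquad \bar Z:=\frac1n\sum_{\tau\in\tree}\block{X_\tau}.$$
Since $\nve\gg n^{2+\gamma}$ this additive error is $o(n^{-1/2})$ and hence negligible relative to the lower bound obtained below, so it suffices to bound $\var[\bar Z]$ from below.

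Next, with $Z_0:=\block{X_0}$ — which is a fair coin flip independent of the walk, because $X_0$ is uniform on $\ve$ and $\bsize{0}=\bsize{1}$ — the law of total variance gives
$$\var[\bar Z]\ \ge\ \var\big(\E[\bar Z\mid Z_0]\big)\ =\ \tfrac14\big(\E[\bar Z\mid Z_0=1]-\E[\bar Z\mid Z_0=0]\big)^2,$$
so it remains to lower-bound the gap between the two seed-conditioned means. Writing $|\tau|$ for the distance from $\tau$ to the root $\troot$ in $\tree$, I have $\E[\bar Z\mid Z_0=b]=n^{-1}\sum_{\tau\in\tree}\P[\block{X_\tau}=1\mid Z_0=b]$. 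By the Markov property of the $(\tree,\pbtrmat)$-walk and Claim~\ref{claim:block-transition}, each one-step block transition out of a node in block $u$ equals $\pbtr{uv}\big(1+O(\sqrt{\log\nve/\nve})\big)$, where $\pbtrmat^{B}$ is the $2\times2$ symmetric stochastic matrix with off-diagonal entries $p$ and second eigenvalue $\lambda:=1-2p\in(0,1)$. Telescoping the product of the $|\tau|$ perturbed block-transition matrices along the root-to-$\tau$ path against $(\pbtrmat^{B})^{|\tau|}$ then gives, uniformly in $\tau$,
$$\P[\block{X_\tau}=1\mid Z_0=1]-\P[\block{X_\tau}=1\mid Z_0=0]=\lambda^{|\tau|}+\Delta_\tau,\qquad |\Delta_\tau|=O\!\big(|\tau|\sqrt{\tfrac{\log\nve}{\nve}}\big),$$
where $\lambda^{|\tau|}$ is the exact two-point correlation of the symmetric broadcast chain with matrix $\pbtrmat^{B}$.

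Finally, I would sum over $\tree$ and conclude. Let $r:=\alpha-1$ be the offspring number of $\tree$, so level $k$ has $r^k$ vertices, $n=\Theta(r^D)$ with $D$ the depth, and $D=O(\log n)$; in particular, using $\nve\gg n^{2+\gamma}$, $|\Delta_\tau|=O(n^{-1})$ for every $\tau$. Hence
$$\E[\bar Z\mid Z_0=1]-\E[\bar Z\mid Z_0=0]=\frac1n\sum_{k=0}^{D}(r\lambda)^k+O(n^{-1}),$$
and the geometric sum is dominated by its top term because $r\lambda>r\lambda^2>1$ by~\eqref{eq:ks} (using $\lambda\in(0,1)$); with $n=\Theta(r^D)$ this gap equals $\Theta(\lambda^D)=\Theta(n^{-\rho})$, $\rho:=\log_r(1/\lambda)=\ln(1/(1-2p))/\ln(\alpha-1)$, the $O(n^{-1})$ error being negligible since $\rho<1/2$ under~\eqref{eq:ks}. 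Combining with the two displays above, $\var[\vh]\gtrsim\var[\bar Z]\gtrsim n^{-2\rho}$, and~\eqref{eq:ks} -- i.e.\ $r\lambda^2>1$ -- is exactly the statement $2\rho<1$. Therefore $\var[\vh\mid\gr]\gg n^{-1}$, i.e.\ $\gg n^{-(1-\zeta)}$ for any fixed $\zeta\in(0,1-2\rho)$, and this holds for every $\gr\in\eventdeg$, an event of probability at least $1/2$.

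I expect the crux to be the uniform control of the accumulated perturbation along the tree: one must check that conditioning on the particular realization $\gr$, rather than running the idealized population block-chain $\pbtrmat^{B}$, distorts the root-to-leaf block-transition gap by only a \emph{vanishing additive} amount, so that the exponentially small signal $\lambda^{|\tau|}$ survives -- this is precisely what $\nve\gg n^{2+\gamma}$ buys, since the per-step error $\sqrt{\log\nve/\nve}$ compounded over depth $O(\log n)$ stays far below $\lambda^D\asymp n^{-\rho}$ when $\rho<1/2$. A secondary, easier point is to verify that the VH degree reweighting really is inert in this instance, which follows from $\func$ being block-constant and $\aff{0\ast}=\aff{1\ast}$.
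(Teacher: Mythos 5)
Your proposal is correct and follows essentially the same route as the paper: on the degree-concentration event $\eventdeg$ you replace $\vh$ by the plain block average, compare the node-level walk to the population block chain (your per-step telescoping/TV-accumulation bound plays the role of the paper's explicit coupling to the idealized tree-indexed chain), and extract the seed signal via $\var[\cdot]\ge\var\left(\E[\cdot\mid\text{seed block}]\right)$ together with the second eigenvalue $\lambda=1-2p$, which is exactly how the paper gets its $\theta^{2\,\mathrm{depth}}$ lower bound after restricting to the leaves (your geometric sum over levels being dominated by the top level is the same observation). The one point to tidy is your reading of \eqref{eq:ks} as $r\lambda^{2}>1$ with $r=\alpha-1$: the stated condition is $2(1-2p)^{2}>1$, which gives your inequality only because $r\ge 2$ (the paper's own proof implicitly treats this case, using depth $\log_{2}n$).
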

\begin{proof}
By Claim~\ref{claim:degrees},
the event $\eventdeg$ occurs with probability at least
$1/2$. Therefore, by the conditional variance formula,
\begin{equation}
\label{eq:neg-1}
\var\left[
\vh
\,\middle|\,\gr
\right]
\geq
\frac{1}{2}
\var\left[
\vh
\,\middle|\,\gr,\eventdeg
\right].
\end{equation}
By symmetry, $\mdegb{0} = \mdegb{1} = \nve/2$. 	
Hence, on $\eventdeg$, we have further that 
\begin{eqnarray}
&&\var\left[
\vh
\,\middle|\,\gr,\eventdeg
\right]\nonumber\\
&&=
\E\left[
\left(
\frac{
	\sum_{\tau \in \tree} \func(X_{\tau})/\deg{X_{\tau}}
}
{
	\sum_{\tau \in \tree} 1/\deg{X_{\tau}}
}
\right)^2
\,\middle|\,\gr,\eventdeg
\right]\nonumber\\
&&\quad -
\left(
\E\left[
\frac{
	\sum_{\tau \in \tree} \func(X_{\tau})/\deg{X_{\tau}}
}
{
	\sum_{\tau \in \tree} 1/\deg{X_{\tau}}
}
\,\middle|\,\gr,\eventdeg
\right]
\right)^2
\nonumber\\
&&\geq
\E\left[
\left(
\frac{
	\sum_{\tau \in \tree} \func(X_{\tau})
}
{
	n
}
\right)^2
\,\middle|\,\gr,\eventdeg
\right]\nonumber\\
&&\quad -
\left(
\E\left[
\frac{
	\sum_{\tau \in \tree} \func(X_{\tau})
}
{
	n
}
\,\middle|\,\gr,\eventdeg
\right]
\right)^2
- O\left(
\sqrt{\frac{\log \nve}{\nve}}
\right)
\nonumber\\
&&=
\var\left[
\frac{1}{n} \sum_{\tau \in \tree} \func(X_{\tau})
\,\middle|\,\gr,\eventdeg
\right]
- 
o(n^{-1}),\label{eq:neg-2}
\end{eqnarray}
by our assumption on $\nve$, where we used that $\func(x) \in [0,1]$ for all $x$.
To simplify notation, in the rest of the proof, we implicitly condition
on $\gr$ and $\eventdeg$. 

The population-level chain satisfies
$$
\pbtr{00}
=
\pbtr{11}
= 1-p,
\qquad
\pbtr{01}
=
\pbtr{10}
= p,
\qquad
\pstat{0}
=
\pstat{1}
=
\frac{1}{2}.
$$
Let $(\tilde{f}_\tau)_{\tau \in \tree}$ be a Markov chain on $\{0,1\}$ indexed by $\tree$ with
transition probabilities $(\pbtr{bu})_{bu\in\{0,1\}}$. By Claim~\ref{claim:block-transition}, on $\eventdeg$, we can couple 
$(\func(X_{\tau}))_\tau$ and $(\tilde{f}_\tau)_\tau$
except with probability $O(n \sqrt{\log \nve/\nve}) = o(1)$, an event we denote by $\tilde{\mathcal{E}}$.  This is because, for each of the $n-1$ transitions, there can only be a difference in probability of $O(\sqrt{\log \nve/\nve})$.
Hence, by the conditional variance formula again,
\begin{eqnarray}
&&\var\left[
\frac{1}{n} \sum_{\tau \in \tree} \func(X_{\tau})
\right]\nonumber\\
&&\geq (1-o(1)) \var\left[
\frac{1}{n} \sum_{\tau \in \tree} \func(X_{\tau})
\,\middle|\,\tilde{\mathcal{E}}\right]\nonumber\\
&&= (1-o(1)) \var\left[
\frac{1}{n} \sum_{\tau \in \tree} \tilde{f}_{\tau}
\,\middle|\,\tilde{\mathcal{E}}\right].\label{eq:neg-3}
\end{eqnarray}
To simplify notation, in the rest of the proof, we implicitly condition
on $\tilde{\mathcal{E}}$. 

Define 
$$
\tilde{g}_\tau
:=
1 - 2 \tilde{f}_\tau
\in \{-1,+1\},
$$
and notice that, by translation,
\begin{equation}
\label{eq:neg-4}
\var\left[
\frac{1}{n} \sum_{\tau \in \tree} \tilde{f}_{\tau}
\right]
=
\frac{1}{4}
\var\left[
\frac{1}{n} \sum_{\tau \in \tree} \tilde{g}_{\tau}
\right]
\end{equation}
and that $\tilde{g}_\tau$ is centered under $\pstatvec$.
Under $(\pbtr{bu})_{bu\in\{0,1\}}$, the function $(-1,+1)$ is a right-eigenvector
with eigenvalue 
$$
\theta := 1-2p \in (0,1).
$$
Hence, for any $\tau, \tau' \in \tree$ at graph distance $\eta$,
it holds that
$$
\E[\tilde{g}_{\tau'}\,|\,\tilde{g}_{\tau}] 
=
\theta^{\eta} \tilde{g}_{\tau},
$$
and
$$
\cov[\tilde{g}_{\tau'},\tilde{g}_{\tau}]
=
\E[
\tilde{g}_{\tau'}\tilde{g}_{\tau}
]
=
\E[
\E[
\tilde{g}_{\tau'}\tilde{g}_{\tau}
\,|\,\tilde{g}_{\tau}
]
]
=
\E[
\theta^{\eta} \tilde{g}_{\tau}^2
]
=
\theta^{\eta},
$$
where we used that $\tilde{g}_{\tau}^2 = 1$.
Let $\mathcal{L}$ be the leaves of $\tree$. Because the
samples $(\tilde{g}_\tau)_{\tau \in \tree}$ are positively correlated
by the above calculation
and $|\mathcal{L}| = \Omega(n)$, we have further that
\begin{equation}
\label{eq:neg-5}
\var\left[
\frac{1}{n} \sum_{\tau \in \tree} \tilde{g}_{\tau}
\right]
=
\Omega\left(
\var\left[
\frac{1}{|\mathcal{L}|} \sum_{\tau \in \mathcal{L}} \tilde{g}_{\tau}
\right]
\right).
\end{equation}

Finally, by symmetry and the conditional variance formula once more,
recalling that $\tau_0$ is the root of $\tree$ we have 
\begin{eqnarray}
\var\left[
\frac{1}{|\mathcal{L}|} \sum_{\tau \in \mathcal{L}} \tilde{g}_{\tau}
\right]
&\geq&
\var\left[
\E\left[
\frac{1}{|\mathcal{L}|} \sum_{\tau \in \mathcal{L}} \tilde{g}_{\tau}
\,\middle|\,
\tilde{g}_{\tau_0}
\right]
\right]\nonumber\\
&=&
\var\left[
\theta^{\log_2 n}
\tilde{g}_{\tau_0}
\right]\nonumber\\
&=& \theta^{2 \log_2 n}\nonumber\\
&=& n^{\log_2 \theta^2}\nonumber\\
&=& \frac{1}{n^{1-\zeta}},
\end{eqnarray}
with $\zeta = 1+\log_2 \theta^2 = \log_2 (2\theta^2) > 0$ by~\eqref{eq:ks}.
Combining the latter with \eqref{eq:neg-1}, \eqref{eq:neg-2}, \eqref{eq:neg-3}, \eqref{eq:neg-4}, and \eqref{eq:neg-5} gives the result. 
\end{proof}

\end{document}